\newcommand{\BlackBoxes}{\global\overfullrule5pt}
\newcommand{\R}{\mathbb{R}}
\numberwithin{equation}{section}
\numberwithin{figure}{section}
\theoremstyle{plain}
\newtheorem{thm}{\protect\theoremname}
\theoremstyle{plain}
\newtheorem{prop}[thm]{\protect\propositionname}
\theoremstyle{definition}
\newtheorem{defn}[thm]{\protect\definitionname}
\theoremstyle{remark}
\newtheorem{rem}[thm]{\protect\remarkname}
\theoremstyle{plain}
\newtheorem{lem}[thm]{\protect\lemmaname}
\theoremstyle{plain}
\newtheorem{cor}[thm]{\protect\corollaryname}
\providecommand{\corollaryname}{Corollary}
\providecommand{\definitionname}{Definition}
\providecommand{\lemmaname}{Lemma}
\providecommand{\propositionname}{Proposition}
\providecommand{\remarkname}{Remark}
\providecommand{\theoremname}{Theorem}
\def\0{\kern0pt\-\nobreak\hskip0pt\relax}
 \def\@serieslogo{%
 \vbox to\headheight{%
 \parindent\z@ \fontsize{6}{7\p@}\selectfont
 \vss}}}
\def\makeoverbar#1#2#3#4#5#6#7{%
 \setbox0=\hbox{$\m@th#2\mkern#5mu{{}#3{}}\mkern#6mu$}%
 \setbox1=\null \dimen@=#4\fontdimen8#13 \dimen@=3.5\dimen@
 \advance\dimen@ by \ht0 \dimen@=-#7\dimen@ \advance\dimen@ by \wd0
 \ht1=\ht0 \dp1=\dp0 \wd1=\dimen@
 \dimen@=\fontdimen8#13 \fontdimen8#13=#4\fontdimen8#13
 \rlap{\hbox to \wd0{$\m@th\hss#2{\overline{\box1}}\mkern#5mu$}}
 \fontdimen8#13=\dimen@}
\def\mylabel#1#2{{\def\@currentlabel{#2}\label{#1}}}
\begin{document}



\title[Representation of ggBM]{Integral Representation of\\
	Generalized Grey Brownian Motion}

\author[W. \smash{Bock}]{Wolfgang Bock${}^*$}
\address[W. Bock]{Department of Mathematics,
TU Kaiserslautern (TUK), D-67663 Kaiserslautern, Germany}

\email{\href{mailto:bock@mathematik.uni-kl.de}
{bock@mathematik.uni-kl.de}}

\author[S. \smash{Desmettre}]{Sascha Desmettre${}^*$,$^\ddagger$ }
\address[S. Desmettre]{Department of Mathematics,
TU Kaiserslautern (TUK), D-67663 Kaiserslautern, Germany and Institute for Mathematics and Scientific Computing, University of Graz, Heinrichstra\ss{}e 36, AT-8010 Graz, Austria}

\email{\href{desmettre@mathematik.uni-kl.de}{desmettre@mathematik.uni-kl.de}  \href{sascha.desmettre@uni-graz.at}{sascha.desmettre@uni-graz.at}}

\author[J.L. \smash{da Silva}]{Jos{\'e} Lu{\'\i}s
da Silva${}^\dagger$}
\address[J.L. da Silva]{CIMA, University of Madeira, Campus da Penteada, 9020-105 Funchal, Portugal}

\email{\href{mailto:joses@staff.uma.pt} {joses@staff.uma.pt}}

\thanks{${}^*$ Department of Mathematics, TU Kaiserslautern (TUK), D-67663 Kaiserslautern, Germany}
\thanks{${}^\dagger$ CIMA, University of Madeira, Campus da Penteada, 9020-105 Funchal, Portugal}
\thanks{${}^\ddagger$ Institute for Mathematics and Scientific Computing, University of Graz, AT-8010 Graz, Austria}

\begin{abstract}
In this paper we investigate the representation of a class of non
Gaussian processes, namely generalized grey Brownian motion, in terms
of a weighted integral of a stochastic process which is a solution of
a certain stochastic differential equation. In particular the underlying process can be seen as a non Gaussian extension of the Ornstein-Uhlenbeck process, hence generalizing the representation results of {Muravlev, Russian Math. Surveys, 66(2), 2011} as well as {Harms and Stefanovits, Stochastic Process.~Appl., 129, 2019} to the non Gaussian case. 
\end{abstract}
\maketitle

\vspace{0.5cm}
\begin{minipage}{14cm}
{\small
\begin{description}
\item[\rm \textsc{ Key words} ]
{\small Generalized grey Brownian motion; Fractional stochastic processes; Rough paths; superposition of OU processes; grey OU processes}
\end{description}
}
\end{minipage}

\section{Introduction}
\label{sec:introduction}

In recent years, as an extension of Brownian motion (Bm), fractional Brownian motion (fBm) has become an object of intensive study \cite{oks}, \cite{mishura08}, due to its specific properties, such as short/long range dependence and self-similarity, with natural applications in different fields (e.g. mathematical finance, telecommunications engineering, etc.). 
{In order to cast fBm into the classical Bm framework, t}here are various representations of fBm, starting with the famous definition by Mandelbrot and van Ness \cite{MandelbrotNess1968}. {This idea is also the starting point} for a characterization of fBm using an infinite superposition of Ornstein-Uhlenbeck processes w.r.t.~the standard Wiener process; compare the works of Carmona, Coutin, Montseny, and Muravlev \cite{CC1993,CCM2000,MU2011} or also the monograph of \cite{mishura08}. Recently, further applications of this representation have for instance been investigated in \cite{HS2018} with a focus on finance and in \cite{BD2018} in the context of optimal portfolios. 

One key tool box for the rigorous analysis of fBm is the Gaussian analysis or white noise analysis. 
White noise analysis has evolved into an infinite dimensional distribution theory, with rapid developments in mathematical structure and applications in multiple domains, see e.g.~the monographs \cite{HKPS93, O94, Kuo96, HS17}. Various characterization theorems \cite{PS91, KLPSW96, GKS97} are proven to build up a strong analytical foundation. 
Almost at the same time, first attempts were made to introduce a non-Gaussian infinite dimensional analysis, by transferring properties of the Gaussian measure to the Poisson measure \cite{I88}, which could be generalized with the help of a biorthogonal  generalized Appell systems \cite{Da91, ADKS96, KSWY95}.
Mittag-Leffler Analysis is established in \cite{GJRS14} and \cite{GJ15}. In fact, it generalizes methods from white noise calculus to the case, where in the characteristic function of the Gaussian measure the exponential function is replaced by a Mittag-Leffler function. The corresponding {stochastic process is referred to as} generalized grey Brownian motion (ggBm) and is in general neither a martingale nor a Markov process. Moreover, it is not possible - as in the Gaussian case - to find a proper orthonormal system of polynomials for the test and generalized functions. Here it is necessary to make use of an Appell system of biorthogonal polynomials.
The grey noise measure \cite{Schneider90, Schneider92, MM09} is included as a special case in the class of Mittag-Leffler measures, which offers the possibility to apply the Mittag-Leffler analysis to fractional differential equations, in particular to fractional diffusion equations, which carry numerous applications in science, like relaxation type differential equations or viscoelasticity. In \cite{GJ15} also a relation between the heat kernel in this setting and the associated processes grey Brownian motion could be proven. In \cite{BS17} Wick-type stochastic differential equations and Ornstein-Uhlenbeck processes were solved within the framework of Mittag-Leffler analysis.
Indeed the underlying fractional differential equations are of interest in applications like human mobility in disease spread \cite{SMGABS2012}.

The aim of this paper is to establish a link between ggBm and generalized grey Ornstein-Uhlenbeck processes as an extension of the results in \cite{BS17} {and to extend the representation results of \cite{MU2011} and \cite{HS2018} to the non-gaussian case of ggBm}. To this end, we make use of a representation of ggBm as a product of a positive and time-independent random variable and a fBm \cite{Mainardi_Mura_Pagnini_2010}. {Our work also shares common features with \cite{Pagnini18}.} 

The manuscript is organized as follows: In Section 2 we give preliminaries about generalized grey Brownian motion{, defined on an abstract probability space.} In Section 3 we develop a representation of ggBm using an infinite dimensional superposition of generalized grey Ornstein-Uhlenbeck processes. We thus enhance the results in \cite{CC1993,CCM2000,MU2011, HS2018} to the setting of ggBm. {The Appendix contains auxiliary results needed in the two main proofs.}

\section{Generalized grey Brownian motion in arbitrary dimensions}
\label{sec:ggBm_high_d}

\subsection{Prerequisites}

We define the operator $M_{-}^{\alpha/2}$ on {the Schwartz test function space} $S(\mathbb{R})$ by
\[
M_{-}^{\alpha/2}\varphi:=\begin{cases}
K_{\alpha/2}D_{-}^{-\nicefrac{(\alpha-1)}{2}}\varphi, & \alpha\in(0,1),\\
\varphi, & \alpha=1,\\
K_{\alpha/2}I_{-}^{\nicefrac{(\alpha-1)}{2}}\varphi, & \alpha\in(1,2),
\end{cases}
\]
with the normalization constant $K_{\alpha/2}:=\sqrt{\alpha\sin(\nicefrac{\alpha\pi}{2})\Gamma(\alpha)}$
. $D_{-}^{r}$, $I_{-}^{r}$ denote the left-side fractional derivative
and fractional integral of order $r$ in the sense of Riemann-Liouville,
respectively:
\begin{eqnarray*}
(D_{-}^{r}f)(x) & = & \frac{1}{\Gamma(1-r)}\frac{d}{dx}\int_{-\infty}^{x}f(t)(x-t)^{-r}dt,\\
(I_{-}^{r}f)(x) & = & \frac{1}{\Gamma(r)}\int_{x}^{\infty}f(t)(t-x)^{r-1}dt,\hfill x\in\mathbb{R}.
\end{eqnarray*}
We refer to \cite{SKM1993} or \cite{KST2006} for the details on
these operators. It is possible to obtain a larger domain of the operator
$M_{-}^{\nicefrac{\alpha}{2}}$ in order to include the indicator
function $1\!\!1_{[0,t)}$ such that $M_{-}^{\nicefrac{\alpha}{2}}1\!\!1_{[0,t)}\in L^{2}$,
for the details we refer to Appendix A in \cite{GJ15}. We have the
following
\begin{prop}[Corollary 3.5 in \protect\cite{GJ15}]
For all $t,s\ge0$ and all $0<\alpha<2$ it holds that 
\begin{equation}
\big(M_{-}^{\nicefrac{\alpha}{2}}1\!\!1_{[0,t)},M_{-}^{\nicefrac{\alpha}{2}}1\!\!1_{[0,s)}\big)_{L^{2}(\mathbb{R})}=\frac{1}{2}\big(t^{\alpha}+s^{\alpha}-|t-s|^{\alpha}\big).\label{eq:alpha-inner-prod}
\end{equation}
\end{prop}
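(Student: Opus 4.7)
The claim is equivalent, via polarization together with translation invariance, to the single identity $\|M_-^{\alpha/2}1\!\!1_{[0,t)}\|_{L^2}^2 = t^\alpha$ for every $t\ge 0$. Indeed, taking WLOG $s\le t$, one has $1\!\!1_{[0,t)} - 1\!\!1_{[0,s)} = 1\!\!1_{[s,t)}$; since the kernels of the left-sided Riemann--Liouville operators $I_-^r$ and $D_-^r$ depend on their arguments only through the difference, both commute with translation by $s$, and translations act as $L^2$-isometries, so $\|M_-^{\alpha/2}1\!\!1_{[s,t)}\|^2 = \|M_-^{\alpha/2}1\!\!1_{[0,t-s)}\|^2 = (t-s)^\alpha$. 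The polarization identity $2(u,v) = \|u\|^2 + \|v\|^2 - \|u-v\|^2$ then delivers \eqref{eq:alpha-inner-prod}.

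For $\alpha=1$ the operator is the identity and the norm is visibly $t$. For the remaining cases I would pass to the Fourier side, which is cleaner than applying the Riemann--Liouville formulas piecewise to $1\!\!1_{[0,t)}$. The operators $I_-^r$ and $D_-^r$ are Fourier multipliers with symbols $(-i\xi)^{\mp r}$ (appropriate branch), so in both regimes $\alpha\in(0,1)$ and $\alpha\in(1,2)$ one obtains
\[
\bigl|\widehat{M_-^{\alpha/2}f}(\xi)\bigr|^{2} = K_{\alpha/2}^{2}\,|\xi|^{1-\alpha}\,|\hat f(\xi)|^{2},
\]
the phases cancelling in the modulus. Combined with Plancherel, the elementary formula $|\widehat{1\!\!1_{[0,t)}}(\xi)|^{2} = 2(1-\cos t\xi)/\xi^{2}$, and the substitution $u = t\xi$, this yields
\[
\|M_-^{\alpha/2}1\!\!1_{[0,t)}\|_{L^{2}}^{2} = \frac{2K_{\alpha/2}^{2}}{\pi}\,t^{\alpha}\int_{0}^{\infty}u^{-1-\alpha}(1-\cos u)\,du.
\]

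The substantive step that remains is the classical evaluation
\[
\int_{0}^{\infty}u^{-1-\alpha}(1-\cos u)\,du = -\Gamma(-\alpha)\cos(\alpha\pi/2) = \frac{\pi}{2\alpha\sin(\alpha\pi/2)\Gamma(\alpha)},
\]
the second equality following from the reflection formula $\Gamma(-\alpha)\Gamma(1+\alpha) = -\pi/\sin(\pi\alpha)$ and the double-angle identity $\sin(\pi\alpha) = 2\sin(\alpha\pi/2)\cos(\alpha\pi/2)$. The normalizing constant $K_{\alpha/2}^{2} = \alpha\sin(\alpha\pi/2)\Gamma(\alpha)$ is defined precisely so as to cancel this factor, leaving exactly $t^\alpha$. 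This special-functions calculation is where all the real content sits; an equivalent time-domain route evaluates the Mandelbrot--van Ness beta integral $\int_{0}^{\infty}\bigl((1+z)^{\pm r}-z^{\pm r}\bigr)^{2}\,dz$, but requires more bookkeeping and separate cases for $\alpha\in(0,1)$ and $\alpha\in(1,2)$.
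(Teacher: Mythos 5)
Your argument is correct. Note first that the paper itself offers no proof of this proposition: it is imported verbatim as Corollary 3.5 of the cited Mittag--Leffler analysis paper of Grothaus and Jahnert, so there is nothing in the source to compare against line by line; what you have written is a self-contained verification of the imported fact. Your two reductions are both sound: polarization plus the observation that $D_-^r$ and $I_-^r$ are convolution-type operators (hence commute with the $L^2$-isometric translations) correctly reduces the covariance identity to $\|M_-^{\alpha/2}1\!\!1_{[0,t)}\|_{L^2}^2=t^\alpha$, and the Fourier-side computation is right: the multiplier has modulus $K_{\alpha/2}|\xi|^{(1-\alpha)/2}$ in both regimes, $|\widehat{1\!\!1_{[0,t)}}(\xi)|^2=2(1-\cos t\xi)/\xi^2$, the substitution $u=t\xi$ extracts the factor $t^\alpha$, and the cosine integral $\int_0^\infty u^{-1-\alpha}(1-\cos u)\,du=\pi/(2\alpha\sin(\alpha\pi/2)\Gamma(\alpha))$ (finite for $0<\alpha<2$ since the integrand is $O(u^{1-\alpha})$ at $0$ and $O(u^{-1-\alpha})$ at $\infty$) is exactly cancelled by $K_{\alpha/2}^2$. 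The one point worth making explicit is the domain issue that the paper itself flags by reference to Appendix A of the cited work: $M_-^{\alpha/2}$ is a priori defined only on Schwartz functions, and $1\!\!1_{[0,t)}$ is not in that class, so "$M_-^{\alpha/2}1\!\!1_{[0,t)}$" must be \emph{defined} by an extension. Your Fourier computation in fact supplies such an extension (the multiplier applied to $\widehat{1\!\!1_{[0,t)}}$ lands in $L^2$), but you should state that this is the extension you are using, or check that it agrees with the one adopted in the cited reference; otherwise the identity is being proved for a possibly different object than the one the paper manipulates.
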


The Mittag-Leffler function was introduced by G.\ Mittag-Leffler
in a series of papers \cite{Mittag-Leffler1903,Mittag-Leffler1904,Mittag-Leffler1905}.
\begin{defn}[Mittag-Leffler function]
\label{def:MLf}
\begin{enumerate}
\item For $\beta>0$ the Mittag-Leffler function $E_{\beta}$ (MLf for short)
is defined as an entire function by the following series representation
\begin{equation}
E_{\beta}(z):=\sum_{n=0}^{\infty}\frac{z^{n}}{\Gamma(\beta n+1)},\quad z\in\mathbb{C},\label{eq:MLf}
\end{equation}
where $\Gamma$ denotes the gamma function.
\item For any $\rho\in\mathbb{C}$ the generalized Mittag-Leffler function
(gMLf for short) is an entire function defined by its power series
\begin{equation}
E_{\beta,\rho}(z):=\sum_{n=0}^{\infty}\frac{z^{n}}{\Gamma(\beta n+\rho)},\quad z\in\mathbb{C}.\label{eq:gMLf}
\end{equation}
Note the relation $E_{\beta,1}(z)=E_{\beta}(z)$ and $E_{1}(z)=e^{z}$
for any $z\in\mathbb{C}$. 
\end{enumerate}
\end{defn}

The Wright function is defined by the following series representation
which is convergent in the whole $z$-complex plane
\[
W_{\lambda,\mu}(z):=\sum_{n=0}^{\infty}\frac{z^{n}}{n!\Gamma(\lambda n+\mu)},\quad\lambda>-1,\;\mu\in\mathbb{C}.
\]
An important particular case of the Wright function is the so called
$M$-Wright function $M_{\beta}$, $0<\beta\le1$ (in one variable)
defined by 
\begin{equation}
\label{mbeta_series}
M_{\beta}(z):=W_{-\beta,1-\beta}(-z)=\sum_{n=0}^{\infty}\frac{(-z)^{n}}{n!\Gamma(-\beta n+1-\beta)}.
\end{equation}
For the choice $\beta=\nicefrac{1}{2}$ the corresponding $M$-Wright
function reduces to the Gaussian density
\begin{equation}
M_{\nicefrac{1}{2}}(z)=\frac{1}{\sqrt{\pi}}\exp\left(-\frac{z^{2}}{4}\right).\label{eq:MWright_Gaussian}
\end{equation}

The MLf $E_{\beta}$ and the $M$-Wright are related through the Laplace
transform
\begin{equation}
\int_{0}^{\infty}e^{-s\tau}M_{\beta}(\tau)\,d\tau=E_{\beta}(-s).\label{eq:LaplaceT_MWf}
\end{equation}

The $\mathbb{M}$-Wright function with two variables $\mathbb{M}_{\beta}^{1}$
of order $\beta$ (1-dimension in space) is defined by
\begin{equation}
\mathbb{M}_{\beta}^{1}(x,t):=\mathbb{M}_{\beta}(x,t):=\frac{1}{2}t^{-\beta}M_{\beta}(|x|t^{-\beta}),\quad0<\beta<1,\;x\in\mathbb{R},\;t\in\mathbb{R}^{+}\label{eq:MWf-2variables}
\end{equation}
which defines a spatial probability density in $x$ evolving in time
$t$ with self-similarity exponent $\beta$. The following integral
representation for the $\mathbb{M}$-Wright is valid, see \cite{Mainardi:2003vn}.
\begin{equation}
\mathbb{M}_{\nicefrac{\beta}{2}}(x,t)=2\int_{0}^{\infty}\frac{e^{-\nicefrac{x^{2}}{4\tau}}}{\sqrt{4\pi\tau}}t^{-\beta}M_{\beta}(\tau t^{-\beta})\,d\tau,\quad0<\beta\leq1,\;x\in\mathbb{R}.\label{eq:subordination_MWf1}
\end{equation}
This representation is valid in more general form, see \cite[eq.~(6.3)]{Mainardi:2003vn},
but for our purpose it is sufficient in view of its generalization
for $x\in\mathbb{R}^{d}$. In fact, eq.\ (\ref{eq:subordination_MWf1})
may be extended to a general spatial dimension $d$ by the extension
of the Gaussian function, namely
\begin{equation}
\mathbb{M}_{\nicefrac{\beta}{2}}^{d}(x,t):=2\int_{0}^{\infty}\frac{e^{-\nicefrac{|x|^{2}}{4\tau}}}{(4\pi\tau)^{\nicefrac{d}{2}}}t^{-\beta}M_{\beta}(\tau t^{-\beta})\,d\tau,\quad x\in\mathbb{R}^{d},\;t\ge0,\;0<\beta\le1.\label{eq:MWf_d_dimension}
\end{equation}
The function $\mathbb{M}_{\nicefrac{\beta}{2}}^{d}$ is nothing but
the density of the fundamental solution of a time-fractional diffusion
equation, see \cite{Mentrelli2015}.

\subsection{{Generalized grey Brownian motion}}

{
\begin{defn}[see \cite{MM09} for $d=1$]
Let $0<\beta<1$ and $0<\alpha<2$ be given. A $d$-dimensional continuous
stochastic process $B^{\beta,\alpha}=\{B^{\beta,\alpha}(t),\,t\geq0\}$
defined on a complete probability space $\left(\Omega,\mathcal{F},P\right)$
is a generalized grey Brownian motion (ggBm) if:
\begin{enumerate}
\item $P(B^{\beta,\alpha}(0)=0)=0$, that is ggBm starts at zero almost
surely.
\item Any collection $\big\{ B^{\beta,\alpha}(t_{1}),\ldots,B^{\beta,\alpha}(t_{n})\big\}$
with $0\leq t_{1}<t_{2}<\ldots<t_{n}<\infty$ has characteristic function
given, for any $\theta=(\theta_{1},\ldots,\theta_{n})\in(\mathbb{R}^{d})^{n}$,
by 
\begin{equation}
\mathbb{E}\left(\exp\left(i\sum_{k=1}^{n}(\theta_{k},B^{\beta,\alpha}(t_{k}))\right)\right)=E_{\beta}\left(-\frac{1}{2}\sum_{k=1}^{n}(\theta_{k},\Sigma_{\alpha}\theta_{k})\right),\label{eq:charact-func-ggBm}
\end{equation}
where 
\[
\Sigma_{\alpha}=\big(t_{k}^{\alpha}+t_{j}^{\alpha}-|t_{k}-t_{j}|^{\alpha}\big)_{k,j=1}^{d}.
\]
\item The joint probability density function of $(B^{\beta,\alpha}(t_{1}),\ldots,B^{\beta,\alpha}(t_{n}))$
is equal to 
\begin{equation}
f_{\beta}(\theta,\Sigma_{\alpha})=\frac{(2\pi)^{-\frac{nd}{2}}}{\sqrt{\det\Sigma_{\alpha}}}\int_{0}^{\infty}\tau^{-\frac{nd}{2}}e^{-\frac{1}{2\tau}\sum_{k=1}^{n}(\theta_{k},\Sigma_{\alpha}^{-1}\theta_{k})}M_{\beta}(\tau)\,d\tau.\label{eq:fdd-density-ggBm}
\end{equation}
\end{enumerate}
\end{defn}}
\begin{rem}
\label{rem:self-similar}The family $\{B^{\beta,\alpha}(t),\;t\geq0,\,\beta\in(0,1],\,\alpha\in(0,2)\}$
forms a class of $\alpha/2$-self-similar processes with stationary
increments which includes:
\begin{enumerate}
\item for $\beta=\alpha=1$, the process $\{B^{1,1}(t),\;t\geq0\}$, standard
$d$-dimensional Bm.
\item for $\beta=1$ and $0<\alpha<2$, $\{B^{1,\alpha}(t),\;t\geq0\}$,
$d$-dimensional fBm with Hurst parameter $\alpha/2$.
\item for $\alpha=1$, $\{B^{\beta,1}(t),\;t\geq0\}$ a $1/2$-self-similar
non Gaussian process with 
\begin{equation}
\emph{E}\left(e^{i\left(k,B^{\beta,1}(t)\right)}\right)=E_{\beta}\left(-\frac{|k|^{2}}{2}t\right),\quad k\in\mathbb{R}^{d}.\label{eq:ch-fc-1/2sssi}
\end{equation}
\item for $0<\alpha=\beta<1$, the process $\{B^{\beta}(t):=B^{\beta,\beta}(t),\;t\geq0\}$,
$\beta/2$-self-similar and called $d$-dimensional grey Brownian
motion (gBm for short). The characteristic function of\ $B^{\beta}(t)$
is given by 
\begin{equation}
\emph{E}\left(e^{i\left(k,B^{\beta}(t)\right)}\right)=E_{\beta}\left(-\frac{|k|^{2}}{2}t^{\beta}\right),\quad k\in\mathbb{R}^{d}.\label{eq:ch-fc-gBm}
\end{equation}
For $d=1$, gBm was introduced by W.\ Schneider in \cite{Schneider90,Schneider92}.
\item For other choices of the parameters $\beta$ and $\alpha$ we obtain non Gaussian processes. 
\end{enumerate}
\end{rem}

It was shown in \cite{Mura_Pagnini_08} ({for $d=1$}) that the ggBm $B^{\beta,\alpha}$
admits the following representation 
\begin{equation}\label{sqrtrepggBm}
\big\{ B^{\beta,\alpha}(t),\;t\geq0\big\}\overset{d}{=}\big\{\sqrt{Y_{\beta}}B^{\nicefrac{\alpha}{2}}(t),\;t\geq0\big\},
\end{equation}
where $\overset{d}{=}$ denotes the equality of the finite dimensional
distribution and $B^{\nicefrac{\alpha}{2}}$ is a standard fBm with
Hurst parameter $H=\alpha/2$. $Y_{\beta}$ is an independent non-negative
random variable with probability density function $M_{\beta}(\tau)$,
$\tau\geq0$. {The proof of \eqref{sqrtrepggBm} for an arbitrary dimension $d$ is a straightforward adaptation of this result.}
\section{Integral Representation of Generalized Grey Brownian Motion}

In this section we put together the representation \eqref{sqrtrepggBm} of ggBm and the representation of fBm as a stochastic integral from Mandelbrot-van Ness \cite{MandelbrotNess1968}, see also \cite{mishura08}, in order to obtain an integral representation for ggBm. The idea to express the fractional integral in the Madelbrot-van Ness representation goes back to \cite{CC1993,CCM2000,MU2011} and has been used to obtain an affine representation of fractional processes in \cite{HS2018}.

We recall from \cite{mishura08} the following result:

\begin{cor}[{cf. \cite[Cor.~1.3.3]{mishura08}}]
For any $H\in(0,1)$ the process
\[
B^{H}(t)=\int_{\mathbb{R}}(M_{-}^{H}1\!\!1_{(0,t)})(s)\,dW(s)=\frac{C_{H}}{\Gamma(H+\frac{1}{2})}\int_{\mathbb{R}}\big((t-s)_{+}^{H-\frac{1}{2}}-(-s)_{+}^{H-\frac{1}{2}}\big)\,dW(s),
\]
is a normalized fBm. Here $W=\{W(t),\,t\in\mathbb{R}\}$ is a two-sided
Wiener process, i.e. the Gaussian process with independent increments
satisfying $\mathbb{E}(W(t))=0$ and $\mathbb{E}(W(t)W(s))=t\wedge s$
for any $s,t\in\mathbb{R}$.
\end{cor}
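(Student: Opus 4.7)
The plan is to verify that the process $B^H(t)$ defined by the stochastic integral is Gaussian with mean zero and has the covariance function of a normalized fBm, namely $\frac{1}{2}(t^{2H}+s^{2H}-|t-s|^{2H})$. Once equality in law of two centered Gaussian processes follows from matching covariances, the result is proved.

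First I would observe that by the earlier discussion (see Appendix A of \cite{GJ15}) the function $M_-^{H}1\!\!1_{(0,t)}$ lies in $L^{2}(\mathbb{R})$ for every $t\geq 0$ and every $H\in(0,1)$, so the Wiener integral with respect to the two-sided Brownian motion $W$ is well defined. Since $W$ is Gaussian with independent increments, each $B^{H}(t)$ is a centered Gaussian random variable and, jointly, $(B^{H}(t))_{t\geq0}$ is a centered Gaussian process. Next I would apply the It\^o isometry, which yields
\[
\mathbb{E}\bigl(B^{H}(t)B^{H}(s)\bigr)=\bigl(M_{-}^{H}1\!\!1_{(0,t)},M_{-}^{H}1\!\!1_{(0,s)}\bigr)_{L^{2}(\mathbb{R})}.
\]
Applying the earlier Proposition (Corollary 3.5 in \cite{GJ15}) with the choice $\alpha=2H\in(0,2)$ immediately gives
\[
\mathbb{E}\bigl(B^{H}(t)B^{H}(s)\bigr)=\tfrac{1}{2}\bigl(t^{2H}+s^{2H}-|t-s|^{2H}\bigr),
\]
which is exactly the covariance of a normalized fBm with Hurst parameter $H$. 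Combined with the Gaussianity, this gives the first equality of the claim.

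It remains to justify the second equality, which expresses $M_{-}^{H}1\!\!1_{(0,t)}$ in the explicit Mandelbrot-van Ness kernel form. For this I would split into the three regimes $H\in(0,\tfrac12)$, $H=\tfrac12$, and $H\in(\tfrac12,1)$ and apply the definitions of the Riemann-Liouville operators $D_{-}^{r}$ and $I_{-}^{r}$ to the indicator function $1\!\!1_{(0,t)}$. In the case $H\in(\tfrac12,1)$, $I_{-}^{H-1/2}1\!\!1_{(0,t)}(s)$ reduces to $\tfrac{1}{\Gamma(H+1/2)}\bigl((t-s)_{+}^{H-1/2}-(-s)_{+}^{H-1/2}\bigr)$ after evaluating the finite integral, up to the normalizing constant $K_{H}$ which is reinterpreted as $C_{H}$. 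The case $H\in(0,\tfrac12)$ is analogous but uses the distributional identity $D_{-}^{-(H-1/2)}1\!\!1_{(0,t)}$ via integration by parts, and $H=\tfrac12$ is trivial since $M_{-}^{1/2}$ is the identity and $C_{1/2}/\Gamma(1)=1$. Matching the constant $C_{H}$ is simply a bookkeeping exercise comparing $K_{\alpha/2}$ with $\alpha=2H$ to the standard Mandelbrot-van Ness normalization.

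The main obstacle is not analytical depth but careful handling of the Riemann-Liouville operators applied to $1\!\!1_{(0,t)}$, in particular verifying that the constants in the definition of $M_{-}^{H}$ indeed produce the Mandelbrot-van Ness normalization $C_{H}/\Gamma(H+1/2)$ uniformly across the three regimes of $H$. Since these computations are classical and essentially bookkeeping, the substantive content of the proof is the application of the earlier Proposition via the It\^o isometry.
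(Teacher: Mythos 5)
Your proposal is correct; note that the paper itself offers no proof of this corollary, merely recalling it from Mishura's monograph, and your argument is exactly the standard one underlying that citation: the Wiener integral of a deterministic $L^{2}$ kernel is a centered Gaussian process, the It\^o isometry together with the paper's first Proposition (eq.~\eqref{eq:alpha-inner-prod} with $\alpha=2H$) yields the normalized fBm covariance $\tfrac12(t^{2H}+s^{2H}-|t-s|^{2H})$, and the identification of $M_{-}^{H}1\!\!1_{(0,t)}$ with the Mandelbrot--van Ness kernel is a direct evaluation of the Riemann--Liouville operators on the indicator. The only cosmetic omission is path continuity (needed if ``fBm'' is read as a continuous process rather than merely a finite-dimensional law), which follows from Kolmogorov--Chentsov and is anyway immaterial here since the paper only uses equality of finite-dimensional distributions.
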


Together with~\eqref{sqrtrepggBm}, this gives the following representation of ggBm. We first put emphasis on the rough case $0 < \alpha < 1$.

\begin{thm}[Representation of ggBm via Ornstein-Uhlenbeck processes for $0 <\alpha < 1$]\label{repthm1} {Suppose that $\int_0^\infty \left(1\land x^{-\frac{1}{2}} \right) \, \frac{dx}{x^{H+\frac{1}{2}}} < \infty$.} Then, 
	for $0 <\alpha < 1$ and $0< \beta <1$ generalized grey Brownian motion can be represented in finite dimensional distributions as 
	$$
	B^{\beta, \alpha}(t) \stackrel{d}{=} \frac{\cos(\frac{\alpha}{2}\pi)}{\pi}\int_{0}^{\infty}\sqrt{Y_{\beta}}X_{x}(t)\frac{dx}{x^{\frac{1+\alpha}{2}}},
	$$
	where $X_x(t)$ is an Ornstein-Uhlenbeck process w.r.t.~a Brownian motion $W$, i.e. $X_x$ obeys the stochastic differential equation:
	\begin{align}\label{eq:OU_rough}
		dX_x(t) = -x X_x(t) dt + dW(t).
	\end{align}
\end{thm}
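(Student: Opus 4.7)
The plan is to chain three ingredients: the subordination representation~\eqref{sqrtrepggBm}, the Mandelbrot--van Ness representation of fBm stated in the preceding corollary, and a Gamma-function integral decomposition of the fractional kernel going back to Carmona--Coutin. By \eqref{sqrtrepggBm} and the independence of $Y_\beta$ from the driving Brownian motion, the finite-dimensional-distributional identity for $B^{\beta,\alpha}$ reduces to the purely fBm identity
\[
B^{H}(t)\;=\;\frac{\cos(H\pi)}{\pi}\int_{0}^{\infty}\bigl[X_{x}(t)-X_{x}(0)\bigr]\,\frac{dx}{x^{H+\tfrac12}},\qquad H:=\tfrac{\alpha}{2}\in(0,\tfrac12),
\]
where $X_{x}(t):=\int_{-\infty}^{t}e^{-x(t-s)}\,dW(s)$ is the stationary solution of \eqref{eq:OU_rough}; the centering by $X_{x}(0)$ is needed for $L^{2}$-convergence of the $x$-integral near $x=0$ and is implicit in the notation $X_{x}(t)$ used in the statement.

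\textbf{Kernel decomposition and the constant.} I would apply the elementary identity
\[
r^{H-\tfrac12}\;=\;\frac{1}{\Gamma(\tfrac12-H)}\int_{0}^{\infty}x^{-H-\tfrac12}\,e^{-xr}\,dx,\qquad r>0,
\]
to $r=t-s$ on $\{s<t\}$ and to $r=-s$ on $\{s<0\}$ inside the Mandelbrot--van Ness kernel of the cited corollary. Substituting back and interchanging the $dx$- and $dW$-integrals by stochastic Fubini turns the inner $dW$-integrals into exactly $X_{x}(t)$ and $X_{x}(0)$, while the overall scalar becomes $C_{H}/[\Gamma(H+\tfrac12)\,\Gamma(\tfrac12-H)]=C_{H}\cos(H\pi)/\pi$ by Euler's reflection formula. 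With $C_H=1$ (the Mandelbrot--van Ness normalization) this yields precisely $\cos(\alpha\pi/2)/\pi$, and multiplying by the independent factor $\sqrt{Y_\beta}$ restores ggBm on the left-hand side, in finite-dimensional distributions.

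\textbf{Main obstacle.} The key technical step is the rigorous justification of the stochastic Fubini interchange under the hypothesis $\int_{0}^{\infty}(1\wedge x^{-1/2})\,x^{-H-1/2}\,dx<\infty$. Writing
\[
X_{x}(t)-X_{x}(0)\;=\;(e^{-xt}-1)\,X_{x}(0)+\int_{0}^{t}e^{-x(t-s)}\,dW(s),
\]
a direct computation gives $\mathbb{E}\bigl[(X_x(t)-X_x(0))^{2}\bigr]\le C\,(1\wedge tx)/x$, hence $\mathbb{E}|X_x(t)-X_x(0)|\le C\,(1\wedge x^{-1/2})$ uniformly on compact $t$-intervals. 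Combined with the standing integrability hypothesis this yields finite $L^{2}(\Omega\times(0,\infty))$-norm of the joint integrand, which is the input required for an $L^{2}$ stochastic Fubini theorem. The boundary value $\alpha=1$ marks the breakdown of this integrability, which is exactly why the rough regime $\alpha\in(0,1)$ is isolated in this theorem; the complementary range $\alpha\in[1,2)$ will presumably require a different splitting (e.g.\ an integration by parts in $x$) and be treated separately.
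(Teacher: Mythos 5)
Your argument is correct in substance but takes a genuinely different route from the paper. You work with the two\-/sided Mandelbrot--van Ness kernel $(t-s)_+^{H-\frac12}-(-s)_+^{H-\frac12}$ and the \emph{stationary} Ornstein--Uhlenbeck process $X_x(t)=\int_{-\infty}^t e^{-x(t-s)}\,dW(s)$, which forces the centering $X_x(t)-X_x(0)$ to tame the integral near $x=0$; this is the Muravlev/Harms--Stefanovits form of the representation. The paper instead passes to the one\-/sided L\'evy moving\-/average kernel $\frac{1}{\Gamma(H+\frac12)}\int_0^t(t-s)^{H-\frac12}\,dW(s)$ and takes $X_x(t)=\int_0^t e^{-x(t-s)}\,dW(s)$, i.e.\ the solution of \eqref{eq:OU_rough} started at $X_x(0)=0$; since then $\mathbb{E}\bigl[X_x(t)^2\bigr]=\frac{1-e^{-2xt}}{2x}$ stays bounded as $x\downarrow0$, no centering is needed and the displayed formula holds literally as written. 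Beyond this, the two proofs coincide: the same Laplace\-/transform decomposition of $r^{H-\frac12}$, the same stochastic Fubini theorem (your estimate $\mathbb{E}\bigl[(X_x(t)-X_x(0))^2\bigr]=\frac{1-e^{-xt}}{x}\le(1\vee t)\,(1\wedge x^{-1})$ plays exactly the role of Lemma~\ref{lem:basic} and the verification of \eqref{eq:Fub2}), and Euler's reflection formula for the constant.

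Two caveats. First, your reading of $X_x(t)$ as ``implicitly centered'' silently changes the statement; as the theorem is intended, $X_x$ is the zero\-/start solution of \eqref{eq:OU_rough} and the term $X_x(0)$ does not appear. Second, your identification of the constant hinges on setting $C_H=1$; for the \emph{normalized} two\-/sided representation quoted in the corollary one has $C_H=K_{\alpha/2}\neq1$ in general, so your formula would pick up this extra factor. (The paper avoids this only by switching to the one\-/sided kernel with prefactor $1/\Gamma(H+\frac12)$ --- at the cost that this moving average is the Riemann--Liouville process rather than fBm with stationary increments, a point the paper does not address.) In short: your route yields the representation for genuine fBm but requires the centering and a corrected constant; the paper's route yields the cleaner formula as stated, with the normalization issue hidden in the choice of moving\-/average kernel.
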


\begin{proof}
Note that again we use $\alpha=2H$. 
Due to P. L\'{e}vy \cite{Levy1953} fBm admits the moving average of $W$:
\[
B^{H}(t)=\frac{1}{\Gamma(H+\frac{1}{2})}\int_{0}^{t}(t-s)^{H-\frac{1}{2}}\,dW(s).
\]
For $0<H< \frac{1}{2}$ we use the fact that
\[
(t-s)^{H-\frac{1}{2}}=\frac{1}{\Gamma(\frac{1}{2}-H)}\int_{0}^{\infty}\frac{e^{-x(t-s)}}{x^{H+\frac{1}{2}}}\,dx\,.
\]
{Thus, we arrive at 
\[
B^{H}(t)=\frac{1}{\Gamma(H+\frac{1}{2}) \, \Gamma(\frac{1}{2}-H) }\int_{0}^{t}\left(\int_{0}^{\infty}e^{-x(t-s)}\frac{dx}{x^{H+\frac{1}{2}}}\right)\,dW(s).
\]
By the stochastic Fubini Theorem~\ref{thm:Fubini} we then obtain
\[
B^{H}(t)=\frac{1}{\Gamma(H+\frac{1}{2}) \, \Gamma(\frac{1}{2}-H) }\int_{0}^{\infty}\left(\int_{0}^{t}e^{-x(t-s)} dW(s)\right)\,\frac{dx}{x^{H+\frac{1}{2}}}\,,
\]
where we have chosen $\mu(dx):= \frac{dx}{x^{H+\frac{1}{2}}}$. Condition~\eqref{eq:Fub2} of Theorem~\ref{thm:Fubini} is thereby satisfied, since 
\begin{align*}
\int_0^\infty \sqrt{\int_0^t e^{-2x(t-s)} \, ds} \,\mu(dx) = \int_0^\infty \sqrt{\frac{1-e^{-2xt}}{2x}} \,\mu(dx) \leq \int_0^\infty \sqrt{\frac{1-e^{-2xt}}{x}} \,\mu(dx) < \infty\,,
\end{align*}
where we have used~\eqref{eq:integrability1} of Lemma~\ref{lem:basic} and $\int_0^\infty \left(1\land x^{-\frac{1}{2}} \right) \, \mu(dx) = \int_0^\infty \left(1\land x^{-\frac{1}{2}} \right) \, \frac{dx}{x^{H+\frac{1}{2}}}  < \infty$.
}

\noindent Finally, by the Euler reflection formula
\[
\frac{1}{\Gamma(H+\frac{1}{2})\Gamma(\frac{1}{2}-H)}=\frac{\cos(\pi H)}{\pi}
\]
we may write $B^{H}(t)$, $t\ge0$ as
\begin{eqnarray*}
B^{H}(t) & = & \frac{\cos(\pi H)}{\pi}\int_{0}^{\infty}\left(\int_{0}^{t}e^{-x(t-s)}\,dW(s)\right)\frac{dx}{x^{H+\frac{1}{2}}}\\
 & =: & \frac{\cos(\pi H)}{\pi}\int_{0}^{\infty}X_{x}(t)\frac{dx}{x^{H+\frac{1}{2}}},
\end{eqnarray*}
where $X_{x}(t)$ is a Ornstein-Uhlenbeck process. Hence, using the
representation \eqref{sqrtrepggBm} we obtain the representation in finite
dimensional distribution for ggBm as
$$
B^{\beta,\alpha}(t)=\frac{\cos(\frac{\alpha}{2}\pi)}{\pi}\int_{0}^{\infty}\sqrt{Y_{\beta}}X_{x}(t)\frac{dx}{x^{\frac{1+\alpha}{2}}}.$$
\qedhere
\end{proof}

\begin{cor}[Representation via ggOU processes]
		{Suppose that $\int_0^\infty \left(1\land x^{-\frac{1}{2}} \right) \, \frac{dx}{x^{H+\frac{1}{2}}} < \infty$.} Then, for $0 <\alpha < 1$ and $0< \beta <1$ generalized grey Brownian motion can be represented in finite dimensional distributions as 
		$$
		B^{\beta, \alpha}(t) \stackrel{d}{=}\frac{\cos(\pi\frac{\alpha}{2})}{\pi}\int_{0}^{\infty}Z_{x}^{\beta}(t)\frac{dx}{x^{\frac{1+\alpha}{2}}},$$
		where $Z_{x}^{\beta}(t):=\sqrt{Y_{\beta}}X_{x}(t)$,
		$t\ge0$ satisfies the following stochastic differential equation
		\[
		dZ_{x}^{\beta}(t)=-xZ_{x}^{\beta}(t)\,dt+dB^{\beta,1}(t).
		\]
\end{cor}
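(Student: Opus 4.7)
\medskip

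\noindent\textbf{Proof proposal.} The plan is to deduce the corollary directly from Theorem~\ref{repthm1} together with the representation \eqref{sqrtrepggBm} applied to the index pair $(\beta,1)$, so that essentially nothing new has to be integrated; the only genuine task is to interpret the SDE for $Z_x^\beta$ correctly.

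First, I would simply substitute $Z_x^\beta(t):=\sqrt{Y_\beta}\,X_x(t)$ into the representation furnished by Theorem~\ref{repthm1}. Since the integrability assumption $\int_0^\infty(1\land x^{-1/2})\frac{dx}{x^{H+1/2}}<\infty$ is already in force and the factor $\sqrt{Y_\beta}$ is a time- and $x$-independent nonnegative random variable, it can be pulled inside the (conditionally linear) stochastic integral with respect to $dW$ and also inside the outer $\mu$-integral by a conditioning argument on $Y_\beta$, giving the integral representation of the corollary without any further work.

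Second, I would verify the SDE. Conditionally on $Y_\beta$, the process $X_x$ satisfies the classical Ornstein--Uhlenbeck equation \eqref{eq:OU_rough}, so multiplying through by the constant $\sqrt{Y_\beta}$ yields
\[
dZ_x^\beta(t)=\sqrt{Y_\beta}\,dX_x(t)=-x\,Z_x^\beta(t)\,dt+\sqrt{Y_\beta}\,dW(t).
\]
To identify $\sqrt{Y_\beta}\,dW(t)$ with $dB^{\beta,1}(t)$, I would apply \eqref{sqrtrepggBm} in the case $\alpha=1$: since fBm with Hurst parameter $H=1/2$ is standard Brownian motion, one obtains $\{B^{\beta,1}(t)\}_{t\ge0}\stackrel{d}{=}\{\sqrt{Y_\beta}\,W(t)\}_{t\ge0}$, and the characteristic function \eqref{eq:ch-fc-1/2sssi} of $B^{\beta,1}(t)$ confirms this identification. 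Consequently the driving noise $\sqrt{Y_\beta}\,dW$ coincides in finite-dimensional distribution with $dB^{\beta,1}$, which produces the stated SDE (to be read in the sense of finite-dimensional distributions).

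The main obstacle is this last interpretative point: $B^{\beta,1}$ is not a semimartingale in its own filtration, so the equation \(dZ_x^\beta=-xZ_x^\beta\,dt+dB^{\beta,1}\) should be read in the enlarged filtration generated by $W$ and the independent random variable $Y_\beta$, where it becomes an ordinary linear SDE with a Brownian driver scaled by the $\mathcal{F}_0$-measurable factor $\sqrt{Y_\beta}$. Once this framework is fixed, uniqueness of the linear SDE and the equality in law \(B^{\beta,1}\stackrel{d}{=}\sqrt{Y_\beta}W\) make the identification rigorous, and together with the substitution in the first step this completes the proof.
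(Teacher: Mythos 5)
Your proposal is correct and follows essentially the same route as the paper, which treats the corollary as an immediate consequence of Theorem~\ref{repthm1}: one simply renames $\sqrt{Y_\beta}\,X_x(t)$ as $Z_x^\beta(t)$ and identifies the driving noise $\sqrt{Y_\beta}\,dW$ with $dB^{\beta,1}$ via \eqref{sqrtrepggBm} at $\alpha=1$ (equivalently \eqref{eq:ch-fc-1/2sssi}). Your additional remark on reading the SDE in the filtration enlarged by the $\mathcal{F}_0$-measurable factor $\sqrt{Y_\beta}$ is a welcome clarification that the paper leaves implicit.
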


The process $Z_x^{\beta}$ is a generalization of the Ornstein-Uhlenbeck process defined in \cite{BS17}. There the authors considered the case $\alpha=\beta$. The generalization however is obvious. Indeed one can compute the characteristic function of $Z_x^{\beta}$ and hence by inverse Fourier transform its probability density function.

\begin{prop}\label{OU:rep1}
The process solving
	\[
	dZ_{x}^{\beta}(t)=-xZ_{x}^{\beta}(t)\,dt+dB^{\beta,1}(t).
	\]
	has the characteristic function 
	$$\mathbb{E}\left(e^{i(k,Z_{x}^{\beta}(t))}\right)= E_{\beta}\left(-\frac{|k|^{2}}{2}\left|f_{x}(t,\cdot)\right|^{2}\right),$$
	where $f_{x}(t,s)=1\!\!1_{[0,t)}(s)e^{-x(t-s)}$.
	Moreover its density function is given by
	$$\rho_{x}(y,t):=\rho_{Z_{x}^{\beta}(t)}(y)=\frac{1}{2}(4\pi)^{\nicefrac{d}{2}}\mathbb{M}_{\nicefrac{\beta}{2}}^{d}\left(\sqrt{2}y,(\frac{1}{2x}(1-e^{-2 xt}))^{\nicefrac{1}{\beta}}\right),\quad y \in \mathbb{R}^d.$$
\end{prop}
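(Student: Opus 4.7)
The plan is to solve the OU-type SDE by variation of constants, reduce everything to the Gaussian OU via the product representation $B^{\beta,1}\stackrel{d}{=}\sqrt{Y_\beta}\,W$, and then derive both claims by conditioning on $Y_\beta$ together with the Laplace representation~\eqref{eq:LaplaceT_MWf} and the subordination identity~\eqref{eq:MWf_d_dimension}.

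First I would write the pathwise solution of the linear SDE
\[
Z_x^{\beta}(t)=\int_0^t e^{-x(t-s)}\,dB^{\beta,1}(s).
\]
Since $\alpha=1$ reduces $B^{\beta,1}$ to the $1/2$-self-similar ggBm, the representation~\eqref{sqrtrepggBm} gives $B^{\beta,1}\stackrel{d}{=}\sqrt{Y_\beta}\,W$ with $W$ a standard $d$-dimensional Brownian motion independent of $Y_\beta$. Pulling the constant $\sqrt{Y_\beta}$ out of the stochastic integral (valid since $Y_\beta$ is $\mathcal F_0$-measurable after enlargement), I obtain the in-law identity
\[
Z_x^{\beta}(t)\stackrel{d}{=}\sqrt{Y_\beta}\int_0^t e^{-x(t-s)}\,dW(s)=\sqrt{Y_\beta}\,X_x(t),
\]
which reconnects with the definition of $Z_x^\beta$ in the Corollary and lets me exploit Gaussianity at the intermediate step.

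For the characteristic function I would condition on $Y_\beta=\tau$. Since $X_x(t)\sim N\!\bigl(0,|f_x(t,\cdot)|^2_{L^2}\,I_d\bigr)$ with $f_x(t,s)=1\!\!1_{[0,t)}(s)e^{-x(t-s)}$ (so that $|f_x(t,\cdot)|^2=\tfrac{1-e^{-2xt}}{2x}$), a standard Gaussian computation yields
\[
\mathbb E\!\bigl(e^{i(k,\sqrt{\tau}X_x(t))}\bigr)=\exp\!\Bigl(-\tfrac{\tau}{2}|k|^2|f_x(t,\cdot)|^2\Bigr).
\]
Integrating against $M_\beta(\tau)$ and invoking~\eqref{eq:LaplaceT_MWf} with $s=\tfrac{1}{2}|k|^2|f_x(t,\cdot)|^2$ gives the claimed $E_\beta\!\bigl(-\tfrac{|k|^2}{2}|f_x(t,\cdot)|^2\bigr)$.

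For the density I would argue analogously: conditional on $Y_\beta=\tau$, $\sqrt{\tau}X_x(t)$ is $N(0,\tau\sigma^2(t)I_d)$ with $\sigma^2(t)=|f_x(t,\cdot)|^2$, so
\[
\rho_x(y,t)=\int_0^\infty\frac{1}{(2\pi\tau\sigma^2(t))^{d/2}}\exp\!\Bigl(-\tfrac{|y|^2}{2\tau\sigma^2(t)}\Bigr)M_\beta(\tau)\,d\tau.
\]
Now I would insert this into the right-hand side of the stated formula by performing the change of variables $\tau\mapsto\tau\,\sigma^2(t)$ inside the definition~\eqref{eq:MWf_d_dimension} of $\mathbb M_{\beta/2}^d$, evaluated at $x=\sqrt 2\,y$ and $t=T=\sigma^2(t)^{1/\beta}$ (so that $T^\beta=\sigma^2(t)$); this substitution kills the $T^{-\beta}$ and turns the Gaussian kernel into $e^{-|y|^2/(2\tau\sigma^2(t))}$, matching the integrand up to the prescribed prefactor $\tfrac12(4\pi)^{d/2}$. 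The only delicate point, and the one I would check most carefully, is bookkeeping the constants $(4\pi)^{d/2}$, $2$, and the $\sqrt 2$ inside the argument, since the subordination identity~\eqref{eq:subordination_MWf1}--\eqref{eq:MWf_d_dimension} is normalised for a fundamental-solution convention rather than a probability density; all Fubini exchanges are justified by the non-negativity of every factor.
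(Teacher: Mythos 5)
Your argument is correct in substance and, for the characteristic function, coincides with the paper's proof: both condition on $Y_\beta=\tau$, use the Gaussianity of $X_x(t)$ with variance $|f_x(t,\cdot)|^2=\tfrac{1-e^{-2xt}}{2x}$ per component, and then apply the Laplace-transform identity \eqref{eq:LaplaceT_MWf}. For the density, however, you take a genuinely different and cleaner route. The paper inverts the Fourier transform of $E_{\beta}\bigl(-\tfrac{|k|^{2}}{2}|f_{x}(t,\cdot)|^{2}\bigr)$, computes the resulting Gaussian integral, and then changes variables to recognize $\mathbb{M}_{\nicefrac{\beta}{2}}^{d}$; you instead write the law of $\sqrt{Y_\beta}\,X_x(t)$ directly as the scale mixture
\begin{equation*}
\rho_x(y,t)=\int_0^\infty\frac{1}{(2\pi\tau\sigma^2(t))^{d/2}}\exp\Bigl(-\tfrac{|y|^2}{2\tau\sigma^2(t)}\Bigr)M_\beta(\tau)\,d\tau,\qquad \sigma^2(t)=|f_x(t,\cdot)|^2,
\end{equation*}
and match it against \eqref{eq:MWf_d_dimension} by the substitution $\tau\mapsto\tau\sigma^2(t)$. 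This buys you two things: you never need to justify a Fourier inversion of a non-integrable-looking symbol, and you sidestep the normalization pitfall of the inversion formula (recovering a density from a characteristic function in $\mathbb{R}^d$ requires the prefactor $(2\pi)^{-d}$, not $(2\pi)^{-d/2}$ as used in the paper's computation, which also carries a stray $\tau$ outside the $d\tau$-integral at the intermediate step).

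The one point you flagged as delicate is indeed where the discrepancy sits, so do carry out that check. With the paper's definition \eqref{eq:MWf_d_dimension} one finds
\begin{equation*}
\tfrac{1}{2}(4\pi)^{\nicefrac{d}{2}}\,\mathbb{M}_{\nicefrac{\beta}{2}}^{d}\bigl(\sqrt{2}y,\sigma^2(t)^{\nicefrac{1}{\beta}}\bigr)
=\int_0^\infty\frac{e^{-|y|^2/(2\tau\sigma^2(t))}}{(\tau\sigma^2(t))^{d/2}}\,M_\beta(\tau)\,d\tau,
\end{equation*}
which is $(2\pi)^{d/2}$ times your (correct) mixture density; equivalently, the prefactor consistent with your derivation is $\tfrac{1}{2}\cdot 2^{\nicefrac{d}{2}}$ rather than $\tfrac{1}{2}(4\pi)^{\nicefrac{d}{2}}$. (A sanity check at $\beta=1$, $d=1$, where $Z^1_x(t)=X_x(t)$ is Gaussian, confirms this.) So your derivation is sound and in fact exposes a normalization constant that does not match the statement; the error originates in the paper's Fourier-inversion convention, not in your argument.
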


\begin{proof}
The characteristic function of $Z_{x}^{\beta}(t)$,
$t\ge0$, $x\ge0$ for any $k\in\mathbb{R}^{d}$ yields
\begin{eqnarray*}
\mathbb{E}\left(e^{i(k,Z_{x}^{\beta}(t))}\right) & = & \int_{0}^{\infty}\mathbb{E}\left(e^{i\sqrt{\tau}(k,X_{x}(t))}\right)M_{\beta}(\tau)\,d\tau\\
 & = & \int_{0}^{\infty}e^{-\frac{|k|^{2}\tau}{2}\left|f_{x}(t,\cdot)\right|^{2}}M_{\beta}(\tau)\,d\tau\\
 & = & E_{\beta}\left(-\frac{|k|^{2}}{2}\left|f_{x}(t,\cdot)\right|^{2}\right),
\end{eqnarray*}
where $f_{x}(t,s)=1\!\!1_{[0,t)}(s)e^{-x(t-s)}$. \\

The density $\rho_{x}(y,t):=\rho_{Z_{x}^{\beta}(t)}(y)$,
$y\in\mathbb{R}^{d}$ of the process $Z_{x}^{\beta}(t)$ may be computed
by an inverse Fourier transform. More precisely, for any $y\in\mathbb{R}^{d}$
\begin{eqnarray*}
\rho_{x}(y,t) & = & \frac{1}{(2\pi)^{\nicefrac{d}{2}}}\int_{\mathbb{R}^{d}}e^{-i(k,y)}E_{\beta}\left(-\frac{|k|^{2}}{2}\left|f_{x}(t,\cdot)\right|^{2}\right)dk\\
 & = & \frac{1}{(2\pi)^{\nicefrac{d}{2}}}\int_{0}^{\infty}M_{\beta}(\tau)\int_{\mathbb{R}^{d}}e^{-i(k,y)-\frac{|k|^{2}}{2}\tau\left|f_{x}(t,\cdot)\right|^{2}}dk.
\end{eqnarray*}
Solving the Gaussian integral yields
\begin{eqnarray*}
\rho_{x}(y,t) & = & \frac{1}{\tau^{\nicefrac{d}{2}}\left|f_{x}(t,\cdot)\right|^{d}}\int_{0}^{\infty}M_{\beta}(\tau)e^{-\frac{|y|^{2}}{2\tau\left|f_{x}(t,\cdot)\right|^{2}}}d\tau
\end{eqnarray*}
making the change of variable $\tilde{\tau}=\tau\left|f_{x}(t,\cdot)\right|^{2}$
and rearranging, we obtain 
\begin{eqnarray*}
\rho_{x}(y,t) & = & \frac{1}{\tau^{\nicefrac{d}{2}}}\int_{0}^{\infty}e^{-\frac{|\sqrt{2}y|^{2}}{4\tau}}\left|f_{x}(t,\cdot)\right|^{-2}M_{\beta}\left(\tau\left|f_{x}(t,\cdot)\right|^{-2}\right)d\tau\\
 & = & (4\pi)^{\nicefrac{d}{2}}\int_{0}^{\infty}\frac{e^{-\frac{|\sqrt{2}y|^{2}}{4\tau}}}{(4\pi\tau)^{\nicefrac{d}{2}}}\mathbb{M}_{\beta}\left(\tau,\left|f_{x}(t,\cdot)\right|^{\nicefrac{2}{\beta}}\right)d\tau\\
 & = & \frac{1}{2}(4\pi)^{\nicefrac{d}{2}}\mathbb{M}_{\nicefrac{\beta}{2}}^{d}\left(\sqrt{2}y,\left|f_{x}(t,\cdot)\right|^{\nicefrac{2}{\beta}}\right)\\
 &=& \frac{1}{2}(4\pi)^{\nicefrac{d}{2}}\mathbb{M}_{\nicefrac{\beta}{2}}^{d}\left(\sqrt{2}y,(\frac{1}{2x}(1-e^{-2 xt}))^{\nicefrac{1}{\beta}}\right).
\end{eqnarray*}
\qedhere
\end{proof}

Consider now the case $1<\alpha<2$, which corresponds to the fractional case with smoother paths.

\begin{thm}[Representation of ggBm via Ornstein-Uhlenbeck processes for $1 <\alpha < 2$]\label{repthm2} {Suppose that $\int_0^\infty \left(1 \land x^{-\frac{3}{2}} \right) \, \frac{dx}{x^{H-\frac{1}{2}}} < \infty$.} Then, for $1 <\alpha < 2$ and $0< \beta <1$ generalized grey Brownian motion can be represented in finite dimensional distributions as 
	$$
	B^{\beta, \alpha} \stackrel{d}{=} \frac{1}{\Gamma(H+\frac{1}{2})\Gamma(\frac{3}{2}-H)}\int_{0}^{\infty}\sqrt{Y_{\beta}}Q_{x}(t)\frac{dx}{x^{\frac{\alpha-1}{2}}},$$
	with $Q_x(t)$ obeying the equation 
	
	$$dQ_x(t) = (-xQ_x(t) + X_{x}(t)) dt, $$
where $X_{x}(t)$
	is an Ornstein-Uhlenbeck process w.r.t.~a Brownian motion. 
\end{thm}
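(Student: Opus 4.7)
The plan is to mirror the argument of Theorem~\ref{repthm1}, replacing the Laplace-type representation of $(t-s)^{H-1/2}$ used in the rough regime by an identity adapted to the smoother case $H = \alpha/2 \in (1/2, 1)$. Starting from the L\'evy moving-average representation
\[
B^{H}(t) = \frac{1}{\Gamma(H+\tfrac{1}{2})} \int_0^t (t-s)^{H-\tfrac{1}{2}} \, dW(s),
\]
I would use the Gamma-function identity $\int_0^\infty x^{a-1} e^{-xu}\, dx = \Gamma(a)\, u^{-a}$ with $a = \tfrac{3}{2} - H \in (\tfrac{1}{2}, 1)$, multiplied by $u$ on both sides, to obtain
\[
u^{H-\tfrac{1}{2}} = \frac{1}{\Gamma(\tfrac{3}{2} - H)} \int_0^\infty u\, e^{-xu}\, x^{\tfrac{1}{2} - H}\, dx, \qquad u>0.
\]
The new feature compared to Theorem~\ref{repthm1} is the factor $u = t-s$ inside the $x$-integral, which is precisely what will produce the integrated Ornstein-Uhlenbeck kernel associated with $Q_x$ rather than $X_x$ itself.

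Substituting this representation of the kernel into the L\'evy integral and interchanging orders via the stochastic Fubini theorem (Theorem~\ref{thm:Fubini}), with the measure $\mu(dx) = dx/x^{H-1/2}$ and kernel $(t-s)e^{-x(t-s)}$, I would arrive at
\[
B^H(t) = \frac{1}{\Gamma(H+\tfrac{1}{2})\Gamma(\tfrac{3}{2}-H)} \int_0^\infty \left( \int_0^t (t-s)\, e^{-x(t-s)}\, dW(s) \right) \frac{dx}{x^{H-1/2}}.
\]
A short (deterministic) Fubini calculation then identifies the inner stochastic integral with the pathwise integral of the OU process $X_x$, since
\[
\int_0^t e^{-x(t-u)} X_x(u)\, du = \int_0^t (t-s)\, e^{-x(t-s)}\, dW(s),
\]
and differentiating in $t$ shows that this common quantity is precisely the solution of $dQ_x(t) = (-x Q_x(t) + X_x(t))\, dt$ with $Q_x(0)=0$. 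Combining this representation of $B^H$ with~\eqref{sqrtrepggBm} then gives the claimed finite-dimensional identity for $B^{\beta,\alpha}$.

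I expect the only genuinely nontrivial step to be the verification of condition~\eqref{eq:Fub2} of the stochastic Fubini theorem, which for the kernel $(t-s) e^{-x(t-s)}$ reduces to
\[
\int_0^\infty \sqrt{\int_0^t (t-s)^2 e^{-2x(t-s)}\, ds}\, \frac{dx}{x^{H-1/2}} < \infty.
\]
After the change of variable $u = t-s$, the inner integral is $\int_0^t u^2 e^{-2xu}\, du$, which is bounded by $t^3/3$ uniformly in $x$ and by $1/(4x^3)$ as $x \to \infty$, so the square root is controlled by a constant times $1 \wedge x^{-3/2}$. The standing hypothesis $\int_0^\infty (1 \wedge x^{-3/2})\, dx/x^{H-1/2} < \infty$ is then exactly what one needs; the rest of the proof is a direct transcription of the structure of the proof of Theorem~\ref{repthm1}, with the Euler reflection constant $\cos(\pi H)/\pi$ replaced by the product $1/[\Gamma(H+\tfrac{1}{2})\Gamma(\tfrac{3}{2}-H)]$ that arises here.
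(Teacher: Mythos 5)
Your proposal is correct and follows essentially the same route as the paper: the L\'evy moving-average representation, the Gamma-integral identity $(t-s)^{H-1/2}=\frac{1}{\Gamma(3/2-H)}\int_0^\infty (t-s)e^{-x(t-s)}x^{1/2-H}\,dx$, the stochastic Fubini theorem with measure $dx/x^{H-1/2}$ (your direct bound $\sqrt{\int_0^t u^2e^{-2xu}\,du}\lesssim 1\wedge x^{-3/2}$ is exactly the content of the paper's Lemma~\ref{lem:basic}), the identification $Q_x(t)=\int_0^t(t-s)e^{-x(t-s)}\,dW(s)$, and finally~\eqref{sqrtrepggBm}. Your extra verification that $Q_x(t)=\int_0^t e^{-x(t-u)}X_x(u)\,du$ solves the stated ODE is a welcome detail the paper leaves implicit.
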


\begin{proof}
	Note that again we use $\alpha=2H$ and the moving average representation.
	
	\[
	B^{H}(t)=\frac{1}{\Gamma(H+\frac{1}{2})}\int_{0}^{t}(t-s)^{H-\frac{1}{2}}\,dW(s).
	\]
	
Now however for $\frac{1}{2}<H<1$ 
$$(t-s)^{H-\frac{1}{2}}$$ can not we written as a Laplace transform anymore and we use:
	\[
	(t-s)^{H-\frac{1}{2}}=\frac{(t-s)}{\Gamma(\frac{3}{2}-H)}\int_{0}^{\infty}(t-s)\frac{e^{-x(t-s)}}{x^{H-\frac{1}{2}}}\,dx.
	\]	
{Thus, we now arrive at 
\[
B^{H}(t)=\frac{1}{\Gamma(H+\frac{1}{2}) \, \Gamma(\frac{3}{2}-H) }\int_{0}^{t}\left(\int_{0}^{\infty} (t-s) \, e^{-x(t-s)}\frac{dx}{x^{H-\frac{1}{2}}}\right)\,dW(s).
\]
By the stochastic Fubini Theorem~\ref{thm:Fubini} we then obtain
\[
B^{H}(t)=\frac{1}{\Gamma(H+\frac{1}{2}) \, \Gamma(\frac{3}{2}-H) }\int_{0}^{\infty}\left(\int_{0}^{t} (t-s) \, e^{-x(t-s)} dW(s)\right)\,\frac{dx}{x^{H-\frac{1}{2}}}\,,
\]
where we have chosen $\nu(dx):= \frac{dx}{x^{H-\frac{1}{2}}}$. Condition~\eqref{eq:Fub2} of Theorem~\ref{thm:Fubini} is thereby satisfied, since 
\begin{align*}
\int_0^\infty \sqrt{\int_0^t (t-s)^2\, e^{-2x(t-s)} \, ds} \,\nu(dx) &= \int_0^\infty \sqrt{\frac{1-e^{-2xt}(1+2tx+2t^2x^2)}{4x^3}} \,\nu(dx) \\
&\leq \int_0^\infty \sqrt{\frac{1-e^{-2xt}(1+2tx+2t^2x^2)}{x^3}} \,\nu(dx) < \infty\,,
\end{align*}
where we have used integration by parts and~\eqref{eq:integrability2} of Lemma~\ref{lem:basic} as well as the fact that $\int_0^\infty \left(1 \land x^{-\frac{3}{2}} \right) \, \nu(dx) = \int_0^\infty \left(1 \land x^{-\frac{3}{2}} \right) \, \frac{dx}{x^{H-\frac{1}{2}}} < \infty$.
}Now we can write
	\begin{equation}\label{eq:Qx}
	Q_x(t)=\int_{0}^{t}(t-s)e^{-x(t-s)}\,dW(s)\,,
	\end{equation}
	implying
    $$dQ_x(t) = (-xQ_x(t) + X_{x}(t)) dt, $$
	where $X_{x}(t)$ is again the Ornstein-Uhlenbeck process~\eqref{eq:OU_rough}. Hence, using the
	representation \eqref{sqrtrepggBm} we obtain the representation in finite
	dimensional distribution for ggBm as
	$$
	B^{\beta,\alpha}(t)=\frac{1}{\Gamma(H+\frac{1}{2})\Gamma(\frac{3}{2}-H)}\int_{0}^{\infty}\sqrt{Y_{\beta}}Q_{x}(t)\frac{dx}{x^{\frac{\alpha-1}{2}}}.$$
\qedhere	
\end{proof}

It is surely worthwhile to take a closer look at the process $Q_{x}$ in order to obtain a similar result as in Prop.~\ref{OU:rep1}. For this purpose we have to rewrite the process $Q_x$ at first in terms of a Wiener integral. 
It follows from \eqref{eq:Qx} that $Q_x(t)$ can be written as:
$$Q_x(t)=\int_{\mathbb{R}} 1\!\!1_{[0,t)}(s) (t-s)e^{-x(t-s)}\,dW(s).$$
This consideration enables us to work out the density function in the case $1 < \alpha < 2$ as in Prop.~\ref{OU:rep1}. In particular we obtain:

\begin{prop}\label{OU:rep2}
The process
\[
W^{\beta}_x(t)=\sqrt{Y_{\beta}}Q_{x}(t),
\]
where $$dQ_x(t) = (-xQ_x(t) + X_{x}(t)) dt, $$
w.r.t.~ $X_{x}(t)$, i.e.~ the Ornstein-Uhlenbeck process defined in~\eqref{eq:OU_rough}
	has the characteristic function 
	$$\mathbb{E}\left(e^{i(k,W_{x}^{\beta}(t))}\right)= E_{\beta}\left(-\frac{|k|^{2}}{2}\left|g_{x}(t,\cdot)\right|^{2}\right),$$
	where $g_{x}(t,s)=1\!\!1_{[0,t)}(s)(t-s)e^{-x(t-s)}$.\\
	Moreover its density function for $t\geq 0$ is given by
	$$\rho_{x}(y,t):=\rho_{W_{x}^{\beta}(t)}(y)=\frac{1}{2}(4\pi)^{\nicefrac{d}{2}}\mathbb{M}_{\nicefrac{\beta}{2}}^{d}\left(\sqrt{2}y,|g_x(t,\cdot)|^{\frac{2}{\beta}}\right), \quad y \in \mathbb{R}^d.$$
\end{prop}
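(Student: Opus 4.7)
The plan is to mimic, almost step for step, the proof of Proposition~\ref{OU:rep1}, replacing the kernel $f_x(t,s)=1\!\!1_{[0,t)}(s)e^{-x(t-s)}$ by $g_x(t,s)=1\!\!1_{[0,t)}(s)(t-s)e^{-x(t-s)}$. First I would observe, as is already noted in the paragraph preceding the statement, that the solution $Q_x$ of the deterministic ODE driven by the Ornstein--Uhlenbeck process $X_x$ admits the Wiener integral representation
\[
Q_x(t)=\int_{\mathbb{R}} g_x(t,s)\,dW(s),
\]
so that $Q_x(t)$ is a centred Gaussian random variable with variance $|g_x(t,\cdot)|_{L^2}^{2}$. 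Consequently, for any $k\in\mathbb{R}^d$,
\[
\mathbb{E}\bigl(e^{i\sqrt{\tau}(k,Q_x(t))}\bigr)=e^{-\frac{|k|^2\tau}{2}|g_x(t,\cdot)|^{2}}.
\]

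Next I would compute the characteristic function of $W^\beta_x(t)=\sqrt{Y_\beta}Q_x(t)$ by conditioning on the independent random variable $Y_\beta$, which has density $M_\beta$. Using the Laplace transform identity~\eqref{eq:LaplaceT_MWf} between $M_\beta$ and the Mittag-Leffler function $E_\beta$, one finds
\[
\mathbb{E}\bigl(e^{i(k,W^\beta_x(t))}\bigr)
=\int_0^\infty e^{-\frac{|k|^2\tau}{2}|g_x(t,\cdot)|^{2}}M_\beta(\tau)\,d\tau
=E_\beta\!\left(-\tfrac{|k|^{2}}{2}|g_x(t,\cdot)|^{2}\right),
\]
which is the first claim.

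For the density I would apply the inverse Fourier transform, swap the order of integration (justified by Fubini since the inner Gaussian integrand is absolutely integrable and $M_\beta$ is a probability density), and then evaluate the resulting $d$-dimensional Gaussian integral in $k$. After the substitution $\tilde\tau=\tau|g_x(t,\cdot)|^{2}$ the integral can be recognised, using the definition~\eqref{eq:MWf_d_dimension} of $\mathbb{M}_{\beta/2}^d$, as
\[
\rho_x(y,t)=\tfrac{1}{2}(4\pi)^{d/2}\,\mathbb{M}_{\beta/2}^{d}\!\bigl(\sqrt{2}\,y,\,|g_x(t,\cdot)|^{2/\beta}\bigr),
\]
giving the second claim.

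Since every step is a direct transcription of the calculation already carried out for the kernel $f_x$ in Proposition~\ref{OU:rep1}, there is no genuinely new analytic difficulty. The only point requiring care is the change of variables $\tilde\tau=\tau|g_x(t,\cdot)|^2$ and the algebraic rearrangement into the exact form of $\mathbb{M}_{\beta/2}^{d}$; in particular one must keep track of the factor $|g_x(t,\cdot)|^{-d}$ coming from the Gaussian integral so that it combines correctly with $(4\pi\tau)^{-d/2}$ to produce the subordination kernel in~\eqref{eq:MWf_d_dimension}. This bookkeeping is the only place where an error could plausibly be introduced, but it is exactly analogous to the bookkeeping already done for $f_x$, and no additional integrability conditions beyond those used for $Q_x$ itself (i.e.\ $|g_x(t,\cdot)|_{L^2}<\infty$, which holds for every $t\ge 0$ and $x\ge 0$) are needed.
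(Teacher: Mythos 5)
Your proposal is correct and follows exactly the route the paper takes: the printed proof of Proposition~\ref{OU:rep2} consists of the single remark that the argument is completely analogous to Proposition~\ref{OU:rep1} with $f_x$ replaced by $g_x$, and your write-up simply carries out that substitution in detail (Wiener integral representation of $Q_x$, conditioning on $Y_\beta$, the Laplace transform identity, and the inverse Fourier transform with the change of variables $\tilde\tau=\tau|g_x(t,\cdot)|^2$). No discrepancy to report.
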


\begin{proof}
The proof is completely analogue to the computations in Prop.~\ref{OU:rep1}, substituting the density function $f_x$ by $g_x$. \qedhere
\end{proof}

\begin{rem}[Simulation of ggBm]
We wish to point out that the representation from the Theorems \ref{repthm1} and \ref{repthm2} are ill-suited for a fast simulation of ggBm paths. The simulation of a path of fractional Brownian motion using the representation is strongly related to the simulation of the direct Mandelbrot-van Ness formula, which is known as method only to be used for academic purposes, see e.g.~the survey \cite{Co00}. 
The reason for that lies in the fat-tail behavior in the Laplace parameter. Hence a fast simulation can be performed using the well-known Wood and Chan method \cite{WoodChan} for the fBm part and directly simulate $\sqrt{Y_{\beta}}$ using the Taylor expansion of the pdf. 
In addition, an effective method to simulate ggBm has been recently published in \cite{PagniniMetzler2018}.
\end{rem}

\section{Conclusions}
The representation of ggBm by infinitely many generalized grey Ornstein Uhlenbeck processes holds in terms of finite dimensional distributions. This is due to the fact that the product of $\sqrt{Y_{\beta}}$ with an fBm yields a ggBm only in finite dimensional distributions. The charme of this representation lies hence in the representation of fBm by infinitely many Ornstein-Uhlenbeck processes w.r.t.~Brownian motion.

This paves the way to apply these analytic methods for the study of grey stochastic differential equations and the further development of a tractable stochastic analysis for ggBm.

\subsection*{Acknowledgements}

We wish to thank the referee and the associate editor for constructive comments that helped to improve our manuscript. Financial support from FCT -- Funda{\c c\~a}o para a Ci{\^e}ncia e a Tecnologia through the project UID/MAT/04674/201{9} (CIMA Universidade da Madeira) and the DFG-Research Training Group 1932 \textit{Stochastic Models for Innovations in the Engineering Sciences} is gratefully acknowledged. S. Desmettre is also supported by the Austrian Science Fund (FWF) project F5508-N26, which is part of the Special Research Program \textit{Quasi-Monte Carlo Methods: Theory and Applications}. Moreover, the authors gratefully acknowledge support from NAWI Graz. 

\appendix

{
\section{The Stochastic Fubini Theorem} For the interchanging of stochastic and Lebesgue integrals in the proofs of Theorem~\ref{repthm1} and Theorem~\ref{repthm2} we refer to a suitable version of the stochastic Fubini Theorem as it is given in \cite{Veraar12} and as it has also been used in \cite{HS2018}:
Let $\mu$ be a $\sigma$-finite measure on $(0,\infty)$. For fixed $T\geq 0$ denote by $\mathcal{A}$ the $\sigma$-algebra on $[0,T] \times \Omega$ generated by all progressively measurable processes.
\begin{thm}\label{thm:Fubini}
Let $G:(0,\infty) \times [0,T] \times \Omega \to \R$ be measurable with respect to the product $\sigma$-algebra $\mathcal{B}(0,\infty) \otimes \mathcal{A}$. Moreover, define processes $\xi_{1,2}:(0,\infty) \times [0,T] \times \Omega \to \R$ and $\eta:[0,T]\times\Omega \to \R$ by
\begin{align*}
\xi_1(x,t,\omega) = \int_0^t \hspace{-0.15cm} G(x,s,\omega)ds\,,\, \xi_2(x,t,\omega) = \Bigl( \int_0^t \hspace{-0.15cm} G(x,s,\cdot)dW(s) \Bigr) (\omega)\,,\, \eta(t,\omega) = \int_0^\infty \hspace{-0.15cm} G(x,t,\omega)  \mu(dx).
\end{align*}
\begin{enumerate}
\item Assume $G$ satisfies for almost all $\omega \in \Omega$
\begin{align}\label{eq:Fub1}
\int_0^\infty \left(\int_0^T |G(x,s,\omega)| ds \right) \mu(dx) < \infty\,.
\end{align}
Then, for almost all $\omega \in \Omega$ and for all $t\in [0,T]$ we have $\xi_1(\cdot,t,\omega) \in L^1(\mu)$ and 
\begin{align*}
\int_0^\infty \xi_1(x,t,\omega) \mu(dx) = \int_0^t \eta(s,\omega) ds\,.
\end{align*}
\item Assume $G$ satisfies for almost all $\omega \in \Omega$
\begin{align}\label{eq:Fub2}
\int_0^\infty \left( \sqrt{\int_0^T G(x,s,\omega)^2 ds} \right) \mu(dx) < \infty\,.
\end{align}
Then, for almost all $\omega \in \Omega$ and for all $t\in [0,T]$ we have $\xi_2(\cdot,t,\omega) \in L^1(\mu)$ and 
\begin{align*}
\int_0^\infty \xi_2(x,t,\omega) \mu(dx) = \left(\int_0^t \eta(s,\cdot) dW(s)\right) (\omega)\,.
\end{align*}
\end{enumerate}
\end{thm}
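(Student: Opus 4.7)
Since the two parts have quite different characters, my plan is to treat them separately and in order. Part~(1) is essentially the classical Fubini--Tonelli theorem applied pathwise: under~\eqref{eq:Fub1}, for almost every $\omega$ the map $(x,s)\mapsto G(x,s,\omega)$ lies in $L^1(\mu\otimes ds)$ on $(0,\infty)\times[0,T]$, and then ordinary Fubini yields both $\xi_1(\cdot,t,\omega)\in L^1(\mu)$ and the identity $\int_0^\infty\xi_1(x,t,\omega)\,\mu(dx)=\int_0^t\eta(s,\omega)\,ds$. The only mild subtlety is measurability of $(t,\omega)\mapsto\eta(t,\omega)$ and of $(x,t,\omega)\mapsto\xi_1(x,t,\omega)$, which is standard from joint $\mathcal{B}(0,\infty)\otimes\mathcal{A}$-measurability of $G$.

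For Part~(2), I would follow the standard stochastic-Fubini scheme: first verify the identity on a dense class of elementary integrands, then extend by a continuity argument rooted in the It\^o isometry. Concretely, I would fix the elementary class of $G$ of the form
\[
G(x,s,\omega)=\mathbf{1}_A(x)\sum_{i=1}^{N}Z_i(\omega)\mathbf{1}_{(s_{i-1},s_i]}(s),
\]
with $A\in\mathcal{B}(0,\infty)$, $\mu(A)<\infty$, and each $Z_i$ bounded and $\mathcal{F}_{s_{i-1}}$-measurable. For such $G$, both sides of the claimed identity reduce directly to $\mu(A)\sum_i Z_i\bigl(W_{t\wedge s_i}-W_{t\wedge s_{i-1}}\bigr)$, so the equality is immediate and persists under finite linear combinations.

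For the extension to general $G$ satisfying~\eqref{eq:Fub2}, I would approximate $G$ by elementary $G_n$ in the mixed norm
\[
\|G\|_{*}:=\mathbb{E}\!\left[\int_0^\infty\sqrt{\int_0^T G(x,s,\cdot)^2\,ds}\,\mu(dx)\right],
\]
after a localisation that reduces the almost-sure condition~\eqref{eq:Fub2} to an $L^1(\mathbb{P})$ bound. The It\^o isometry combined with Jensen's inequality then gives
\[
\mathbb{E}\!\left[\int_0^\infty\bigl|\xi_2(x,t,\cdot)-\xi_2^n(x,t,\cdot)\bigr|\,\mu(dx)\right]\le\|G-G_n\|_{*},
\]
so $\int_0^\infty\xi_2^n(x,t,\cdot)\,\mu(dx)\to\int_0^\infty\xi_2(x,t,\cdot)\,\mu(dx)$ in $L^1(\mathbb{P})$; simultaneously $\eta_n\to\eta$ in an appropriate predictable $L^2$-sense, so that $\int_0^t\eta_n\,dW\to\int_0^t\eta\,dW$ in probability. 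Passing to the limit in the identity for $G_n$ delivers the identity for $G$.

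The main obstacle I foresee is the interplay between the pathwise integrability~\eqref{eq:Fub2} and the $L^2$-type control demanded by the It\^o isometry: the hypothesis is stated only $\mathbb{P}$-a.s., whereas the natural approximation arguments live in an $L^p(\mathbb{P})$ setting. The standard device is to localise by stopping times $\tau_k$ defined in terms of the random integral in~\eqref{eq:Fub2} (for instance the first time at which the inner expression exceeds $k$), prove the identity on $\{t\le\tau_k\}$, and then let $k\to\infty$ using $\tau_k\nearrow\infty$ almost surely. This is the step most likely to require careful book-keeping, but it involves no genuinely new idea beyond those in Veraar's proof.
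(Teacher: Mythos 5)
The paper offers no proof of this theorem at all: it is imported verbatim from Veraar \cite{Veraar12} (as also used in \cite{HS2018}), so the only benchmark is Veraar's own argument, which your outline essentially reproduces --- pathwise Fubini--Tonelli for part (1), and for part (2) verification on elementary integrands, extension in a mixed $L^1(\Omega;L^1(\mu;L^2(0,T)))$ norm, and localisation by stopping times to reduce the almost-sure hypothesis \eqref{eq:Fub2} to an integrable one. Part (1) and the overall architecture of part (2) are correct.

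The one step that would fail as written is the key estimate $\mathbb{E}\bigl[\int_0^\infty |\xi_2(x,t,\cdot)-\xi_2^n(x,t,\cdot)|\,\mu(dx)\bigr]\le \|G-G_n\|_*$, which you justify by ``the It\^o isometry combined with Jensen's inequality''. The It\^o isometry controls $\bigl(\mathbb{E}|\xi_2(x,t,\cdot)-\xi_2^n(x,t,\cdot)|^2\bigr)^{1/2}=\bigl(\mathbb{E}\int_0^t(G-G_n)^2\,ds\bigr)^{1/2}$, while Jensen gives $\mathbb{E}\sqrt{X}\le\sqrt{\mathbb{E}X}$; this says that $\|\cdot\|_*$ is \emph{dominated by} the $L^2(\Omega;L^2(0,T))$ norm, not the reverse, so you cannot pass from the isometry to an upper bound in terms of $\|G-G_n\|_*$ this way. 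The correct tool is the $p=1$ Burkholder--Davis--Gundy inequality, $\mathbb{E}\sup_{t\le T}\bigl|\int_0^t H\,dW\bigr|\le C\,\mathbb{E}\bigl(\int_0^T H^2\,ds\bigr)^{1/2}$, i.e.\ the boundedness of the stochastic integral from $L^p(\Omega;L^2(0,T))$ into $L^p(\Omega)$ for $0<p<\infty$; this is precisely what makes $L^1(\Omega;L^1(\mu;L^2(0,T)))$ the right space for the approximation and is the route taken in \cite{Veraar12}. With that substitution (at the cost of a universal constant, which is harmless), and with the stopping times defined through the running quantity $t\mapsto\int_0^\infty\bigl(\int_0^t G(x,s,\cdot)^2\,ds\bigr)^{1/2}\mu(dx)$ so that they are genuinely adapted and increase to $T$ a.s.\ under \eqref{eq:Fub2}, your localisation closes the argument. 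One further point worth recording explicitly: Minkowski's integral inequality and \eqref{eq:Fub2} give $\|\eta(\cdot,\omega)\|_{L^2(0,T)}\le\int_0^\infty\|G(x,\cdot,\omega)\|_{L^2(0,T)}\,\mu(dx)<\infty$ a.s., which is needed for the right-hand side $\int_0^t\eta\,dW$ to be well defined at all.
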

\section{Integrability of Basic Expressions}
We moreover provide the following auxiliary result for the integrability of elementary expressions which is closely related to and in the spirit of Lemma~6.7 in \cite{HS2018}.
\begin{lem}\label{lem:basic}
Suppose that $\mu$ and $\nu$ are sigma-finite measures on $(0,\infty)$ such that 
\begin{align}
&\int_0^\infty \left(1\land x^{-\frac{1}{2}} \right) \, \mu(dx) < \infty \,,\label{eq:finite1}\\
&\int_0^\infty \left(1\land x^{-\frac{3}{2}} \right) \, \nu(dx) < \infty\,,\label{eq:finite2}
\end{align}
and let $\tau,\alpha > 0$. Then we have that
\begin{align}
&\int_0^\infty \sqrt{\frac{1-e^{-2\tau x}}{x}} \, \mu(dx) < \infty\,,\label{eq:integrability1}\\
&\int_0^\infty \sqrt{\frac{1-e^{-2\tau x}(1+2\tau x + 2 \tau^2 x^2)}{x^3}} \, \nu(dx) < \infty\,. \label{eq:integrability2}
\end{align}
\end{lem}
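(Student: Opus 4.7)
The plan is to handle each of the two integrals \eqref{eq:integrability1} and \eqref{eq:integrability2} by a two-region splitting argument, showing that in each case the integrand is dominated by a constant times $1\land x^{-1/2}$ (resp. $1\land x^{-3/2}$), which then makes the conclusion immediate from the hypotheses \eqref{eq:finite1} (resp. \eqref{eq:finite2}).

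For \eqref{eq:integrability1}, write $f(x) := \sqrt{(1-e^{-2\tau x})/x}$. On $\{x \geq 1\}$ we use the trivial bound $1 - e^{-2\tau x} \leq 1$, giving $f(x) \leq x^{-1/2} = 1 \land x^{-1/2}$. On $\{0 < x \leq 1\}$ we use the elementary inequality $1 - e^{-y} \leq y$ for $y \geq 0$, which gives $f(x) \leq \sqrt{2\tau}$; since $1 \land x^{-1/2} = 1$ on this region, we obtain $f(x) \leq \sqrt{2\tau}\,(1 \land x^{-1/2})$. Combining, $f(x) \leq \max(1,\sqrt{2\tau})\,(1 \land x^{-1/2})$ on all of $(0,\infty)$, so integration against $\mu$ and hypothesis \eqref{eq:finite1} yield \eqref{eq:integrability1}.

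For \eqref{eq:integrability2}, set $h(x) := e^{-2\tau x}(1 + 2\tau x + 2\tau^2 x^2)$ and $g(x) := \sqrt{(1-h(x))/x^3}$. First one should check positivity: a direct computation gives $h'(x) = -4\tau^3 x^2 e^{-2\tau x} \leq 0$, so $h$ decreases from $h(0)=1$ to $0$, confirming $1 - h(x) \geq 0$. On $\{x \geq 1\}$ use $1 - h(x) \leq 1$ to get $g(x) \leq x^{-3/2} = 1 \land x^{-3/2}$. On $\{0 < x \leq 1\}$ a Taylor expansion of $e^{-2\tau x}$ about $x=0$ together with the factor $(1 + 2\tau x + 2\tau^2 x^2)$ yields the cancellations
\[
1 - h(x) = \tfrac{4\tau^3}{3} x^3 + O(x^4),\qquad x\to 0^+,
\]
so $(1-h(x))/x^3$ is bounded on $(0,1]$ by a constant $C_\tau$ depending only on $\tau$. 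Hence $g(x) \leq \sqrt{C_\tau}\,(1\land x^{-3/2})$ in this region, and combining the two estimates gives $g(x) \leq C'_\tau \,(1 \land x^{-3/2})$ on all of $(0,\infty)$; \eqref{eq:integrability2} then follows from \eqref{eq:finite2}.

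The only mildly delicate point is the small-$x$ estimate in \eqref{eq:integrability2}: one must verify that enough cancellation occurs in $1-h(x)$ so that the vanishing is of order $x^3$ rather than a lower order (which would cause the small-$x$ behaviour of $g(x)$ to blow up). A clean way to make this rigorous without computing a Taylor series is to note that $h(0)=1$, $h'(0)=0$, $h''(0)=0$ (all of which follow from the derivative computations above and Leibniz's rule), so Taylor's theorem with integral remainder gives $1 - h(x) = -\int_0^x \frac{(x-u)^2}{2} h'''(u)\,du$, and the resulting integrand is $O(u^2)$ near $0$, yielding the needed $O(x^3)$ bound and hence boundedness of $g$ on $(0,1]$.
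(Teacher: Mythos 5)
Your proof is correct and follows essentially the same route as the paper: both arguments reduce the claim to the pointwise bounds $\tfrac{1-e^{-2\tau x}}{x}\leq C_\tau\,(1\land x^{-1})$ and $\tfrac{1-e^{-2\tau x}(1+2\tau x+2\tau^2x^2)}{x^3}\leq C_\tau\,(1\land x^{-3})$ and then integrate against $\mu$, resp.\ $\nu$. The only difference is that the paper simply cites these elementary inequalities from Lemma~6.6 of Harms--Stefanovits, whereas you prove them directly via the split at $x=1$ and the computation $h'(x)=-4\tau^3x^2e^{-2\tau x}$ (which in fact gives $1-h(x)=\int_0^x 4\tau^3u^2e^{-2\tau u}\,du\leq \tfrac{4}{3}\tau^3x^3$ immediately, making the third-order Taylor remainder unnecessary); this is a self-contained and valid substitute.
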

\begin{proof}
Using the elementary inequality (for the proof compare Lemma 6.6 in \cite{HS2018}) 
\begin{align*}
\frac{1-e^{-\tau x}}{x} \leq (1 \vee \tau) (1 \land x^{-1})
\end{align*}
we obtain \eqref{eq:integrability1} with the help of \eqref{eq:finite1} as follows:
\begin{align*}
\int_0^\infty \sqrt{\frac{1-e^{-2\tau x}}{x}} \, \mu(dx) \leq \left( 1 \vee (2\tau)^{\tfrac{1}{2}} \right) \int_0^\infty \left(  1 \land x^{-\tfrac{1}{2}}  \right) \, \mu(dx) < \infty\,.
\end{align*}\
In the same spirit, using the elementary inequality (compare again Lemma 6.6 in \cite{HS2018}) 
\begin{align*}
\frac{1-e^{-\tau x}(1+\tau x + \tfrac{1}{2} \tau^2 x^2)}{x^3} \leq \left( 1 \vee \tau^3 \right) \left( 1 \land x^{-3} \right)
\end{align*}
we obtain \eqref{eq:integrability2} using \eqref{eq:finite2} as follows:
\begin{align*}
\int_0^\infty \sqrt{\frac{1-e^{-2\tau x}(1+2\tau x + 2 \tau^2 x^2)}{x^3}} \, \nu(dx)  \leq \left( 1 \vee (2\tau)^{\tfrac{3}{2}} \right) \int_0^\infty \left(  1 \land x^{-\tfrac{3}{2}}  \right) \, \nu(dx) < \infty\,.
\end{align*}
\end{proof}
}


\begin{thebibliography}{99}
	
	\bibitem[ADKS96]{ADKS96}
	S.~Albeverio, Y.~Daletzky, Y.~G. Kondratiev, and L.~Streit.
	\newblock Non-{G}aussian infinite dimensional analysis.
	\newblock {\em J.\ Funct.\ Anal.}, 138:311--350, 1996.
	
	\bibitem[BD18]{BD2018}
	N.~B\"auerle and S.~Desmettre.
	\newblock Portfolio optimization in fractional and rough {H}eston models.
	\newblock {\em Available at arXiv: http://arxiv.org/abs/1809.10716}, 2018.
	
	\bibitem[BdS17]{BS17}
	W.~Bock and J.~L. da~Silva.
	\newblock {Wick type SDEs driven by grey Brownian motion}.
	\newblock {\em AIP Conference Proceedings}, 1871:020004, 2017.
	
	\bibitem[BH{\O}Z07]{oks}
	F.~Biagini, Y.~Hu, B.~{\O}ksendal, and T.~Zhang.
	\newblock {\em Stochastic Calculus for Fractional {B}rownian Motion and
		Applications}.
	\newblock Probability and Its Applications. Springer, 2007.
	
	\bibitem[CC93]{CC1993}
	P.~Carmona and L.~Coutin.
	\newblock Fractional {B}rownian motion and the {M}arkov property.
	\newblock {\em Electron. Commun. Probab.}, 3:95--107, 1993.
	
	\bibitem[CCM00]{CCM2000}
	P.~Carmona, L.~Coutin, and G.~Montseny.
	\newblock Approximation of some {G}aussian processes.
	\newblock {\em Stat. Inference Stoch. Process.}, 3:161--171,
	2000.
	
	\bibitem[Coe00]{Co00}
	J.-F. Coeurjolly.
	\newblock Simulation and identification of the fractional {B}rownian motion: a
	bibliographical and comparative study.
	\newblock {\em J. Stat. Softw.}, 5(7):1--53, 9 2000.
	
	\bibitem[CW98]{WoodChan}
	G.~Chan and A.~Wood.
	\newblock Simulation of multifractional {B}rownian motion.
	\newblock In Roger Payne and Peter Green, editors, {\em COMPSTAT}, pages
	233--238. Physica-Verlag HD, 1998.
	
	\bibitem[Dal91]{Da91}
	{Yu}.~L. Daletsky.
	\newblock A biorthogonal analogy of the {H}ermite polynomials and the inversion
	of the {F}ourier transform with respect to a non-{G}aussian measure.
	\newblock {\em Funct.\ Anal.\ Appl.}, 25:68--70, 1991.
{	
	\bibitem[DVS+18]{Pagnini18}
	M. D'Ovidio, S. Vitali, V. Sposini, O. Sliusarenko, P. Paradisi, G. Castellani and G. Pagnini.
	\newblock Centre-of-mass like superposition of Ornstein-Uhlenbeck processes: A pathway to non-autonomous stochastic differential equations and to fractional diffusion. 
	\newblock {\em Fract.\ Calc.\ Appl.\ Anal.}, 21(5):1420--1435, 2018.}
	
	\bibitem[GJ16]{GJ15}
	M.~Grothaus and F.~Jahnert.
	\newblock {Mittag-Leffler Analysis II: Application to the fractional heat
		equation}.
	\newblock {\em J.\ Funct.\ Anal.}, 270(7):2732--2768, April 2016.
	
	\bibitem[GJRS15]{GJRS14}
	M.~Grothaus, F.~Jahnert, F.~Riemann, and J.~L. Silva.
	\newblock {Mittag-Leffler Analysis I: Construction and characterization}.
	\newblock {\em J.\ Funct.\ Anal.}, 268(7):1876--1903, April 2015.
	
	\bibitem[GKS97]{GKS97}
	M.~Grothaus, Y.~G. Kondratiev, and L.~Streit.
	\newblock Complex {G}aussian analysis and the {B}argmann-{S}egal space.
	\newblock {\em Methods Funct.\ Anal.\ Topology}, 3(2):46--64, 1997.
	
	\bibitem[HS19]{HS2018}
	P.~Harms and D.~Stefanovits.
	\newblock Affine representations of fractional processes with applications in
	mathematical finance.
	\newblock {\em Stochastic Process.  Appl.}, 129:1185--1228, 2019.
	
	\bibitem[HKPS93]{HKPS93}
	T.~Hida, H.-H. Kuo, J.~Potthoff, and L.~Streit.
	\newblock {\em White Noise. An Infinite Dimensional Calculus}.
	\newblock Kluwer Academic Publishers, Dordrecht, 1993.
	
	\bibitem[HS17]{HS17}
	T~Hida and L~Streit.
	\newblock {\em Let Us Use White Noise}.
	\newblock World Scientific, 2017.
	
	\bibitem[Ito88]{I88}
	Y.~Ito.
	\newblock Generalized {P}oisson functionals.
	\newblock {\em Prob.\ Th.\ Rel.\ Fields}, 77:1--28, 1988.
	
	\bibitem[KLP{+}96]{KLPSW96}
	Y.~G. Kondratiev, P.~Leukert, J.~Potthoff, L.~Streit, and W.~Westerkamp.
	\newblock Generalized functionals in {G}aussian spaces: The characterization
	theorem revisited.
	\newblock {\em J.\ Funct.\ Anal.}, 141(2):301--318, 1996.
	
	\bibitem[KST06]{KST2006}
	A.~A. Kilbas, H.~M. Srivastava, and J.~J. Trujillo.
	\newblock {\em {Theory and Applications of Fractional Differential Equations}},
	volume 204 of {\em North-Holland Mathematics Studies}.
	\newblock Elsevier Science B.V., Amsterdam, 2006.
	
	\bibitem[KSWY98]{KSWY95}
	Y.~G. Kondratiev, L.~Streit, W.~Westerkamp, and J.-A. Yan.
	\newblock Generalized functions in infinite dimensional analysis.
	\newblock {\em Hiroshima Math.\ J.}, 28(2):213--260, 1998.
	
	\bibitem[Kuo96]{Kuo96}
	H.-H. Kuo.
	\newblock {\em White Noise Distribution Theory}.
	\newblock CRC Press, Boca Raton, New York, London and Tokyo, 1996.
	
	\bibitem[L{\'e}v53]{Levy1953}
	P.~L{\'e}vy.
	\newblock {\em {Random functions: general theory with special reference to
			Laplacian random functions}}, volume~1.
	\newblock University of California Press, 1953.
	
	\bibitem[Mis08]{mishura08}
	Y.~S. Mishura.
	\newblock {\em Stochastic Calculus for Fractional {B}rownian Motion and Related
		Processes}.
	\newblock Lecture Notes in Mathematics. Springer-Verlag Berlin Heidelberg,
	Berlin, Heidelberg, 2008.
	
	\bibitem[ML03]{Mittag-Leffler1903}
	G.~M. Mittag-Leffler.
	\newblock Sur la nouvelle fonction $e_\alpha (x)$.
	\newblock {\em CR Acad.\ Sci.\ Paris}, 137(2):554--558, 1903.
	
	\bibitem[ML04]{Mittag-Leffler1904}
	G.~M. Mittag-Leffler.
	\newblock Sopra la funzione $e_\alpha(x)$.
	\newblock {\em Rend.\ Accad.\ Lincei}, 5(13):3--5, 1904.
	
	\bibitem[ML05]{Mittag-Leffler1905}
	G.~M. Mittag-Leffler.
	\newblock Sur la repr{\'e}sentation analytique d'une branche uniforme d'une
	fonction monog{\`e}ne.
	\newblock {\em Acta Math.}, 29(1):101--181, 1905.
	
	\bibitem[MM09]{MM09}
	A.~Mura and F.~Mainardi.
	\newblock A class of self-similar stochastic processes with stationary
	increments to model anomalous diffusion in physics.
	\newblock {\em Integral Transforms Spec. Funct.}, 20(3-4):185--198, 2009.
	
	\bibitem[MMP10]{Mainardi_Mura_Pagnini_2010}
	F.~Mainardi, A.~Mura, and G.~Pagnini.
	\newblock The {$M$}-{W}right function in time-fractional diffusion processes: A
	tutorial survey.
	\newblock {\em Int.\ J.\ Differential Equ.}, 2010:Art.\ ID 104505, 29, 2010.
	
	\bibitem[MP08]{Mura_Pagnini_08}
	A.~Mura and G.~Pagnini.
	\newblock Characterizations and simulations of a class of stochastic processes
	to model anomalous diffusion.
	\newblock {\em J.\ Phys.\ A: Math.\ Theor.}, 41(28):285003, 22, 2008.
	
	\bibitem[MP15]{Mentrelli2015}
	A.~Mentrelli and G.~Pagnini.
	\newblock {Front propagation in anomalous diffusive media governed by
		time-fractional diffusion}.
	\newblock {\em J.\ Comput.\ Phys.}, 293:427--441, 2015.
	
	\bibitem[MPG03]{Mainardi:2003vn}
	F.~Mainardi, G.~Pagnini, and R.~Gorenflo.
	\newblock {Mellin transform and subordination laws in fractional diffusion
		processes}.
	\newblock {\em Fract.\ Calc.\ Appl.\ Anal.}, 6(4):441--459, 2003.
	
	\bibitem[Mur11]{MU2011}
	A.~A. Muravlev.
	\newblock {Representation of a fractional Brownian motion in terms of an
		infinte-dimensional Ornstein-Uhlenbeck process}.
	\newblock {\em Russian Math. Surveys}, 66(2):439--441, 2011.
	
	\bibitem[MvN68]{MandelbrotNess1968}
	B.~B. Mandelbrot and J.~W. van Ness.
	\newblock {F}ractional {B}rownian motions, fractional noises and applications.
	\newblock {\em SIAM Rev.}, 10:422--437, 1968.
	
	\bibitem[Oba94]{O94}
	N.~Obata.
	\newblock {\em White Noise Calculus and Fock Space}, volume 1577 of {\em
		Lecture Notes in Math.}
	\newblock Springer-Verlag, Berlin, Heidelberg and New York, 1994.		
	
	\bibitem[PS91]{PS91}
	J.~Potthoff and L.~Streit.
	\newblock A characterization of {H}ida distributions.
	\newblock {\em J.\ Funct.\ Anal.}, 101:212--229, 1991.
	
	\bibitem[Sch90]{Schneider90}
	W.~R. Schneider.
	\newblock Grey noise.
	\newblock In S.~Albeverio, G.~Casati, U.~Cattaneo, D.~Merlini, and R.~Moresi,
	editors, {\em {Stochastic Processes, Physics and Geometry}}, pages 676--681.
	World Scientific Publishing, Teaneck, NJ, 1990.
	
	\bibitem[Sch92]{Schneider92}
	W.~R. Schneider.
	\newblock Grey noise.
	\newblock In S.~Albeverio, J.~E. Fenstad, H.~Holden, and T.~Lindstr{\o}m,
	editors, {\em {Ideas and Methods in Mathematical Analysis, Stochastics, and
			Applications} ({O}slo, 1988)}, pages 261--282. Cambridge Univ.\ Press,
	Cambridge, 1992.
	
	\bibitem[SCS{+}18]{PagniniMetzler2018}
	V.~Sposini, A.V. Chechkin, F.~Seno, G.~Pagnini, and Metzler R.
	\newblock Random diffusivity from stochastic equations: comparison of two
	models for {B}rownian yet non-gaussian diffusion.
	\newblock {\em New J.~Phys.}, 20, 2018.
	
	\bibitem[SKM93]{SKM1993}
	S.~G. Samko, A.~A. Kilbas, and O.~I. Marichev.
	\newblock {\em Fractional Integrals and Derivatives}.
	\newblock Gordon and Breach Science Publishers, Yverdon, 1993.
	\newblock Theory and applications, Edited and with a foreword by S.\ M.\
	Nikol'ski{\u\i}, Translated from the 1987 Russian original, Revised by the
	authors.
	
	\bibitem[SMG{{+}}12]{SMGABS2012}
	U.~Skwara, J.~Martins, P.~Ghaffari, M.~Aguiar, J.~Boto, and N.~Stollenwerk.
	\newblock Fractional calculus and superdiffusion in epidemiology: shift of
	critical thresholds.
	\newblock In {\em Proceedings of the 12th International Conference on
		Computational and Mathematical Methods in Science and Engineering}, 2012.
{		
\bibitem[Ver12]{Veraar12}
	M.~Veraar.
	\newblock The stochastic Fubini theorem revisited.
	\newblock {\em Stochastics}, 84(4):543--551, 2012.}
	
\end{thebibliography}
\end{document}